\theoremstyle{plain}
\newtheorem{theorem}{Theorem}[section]
\numberwithin{equation}{section}
\numberwithin{figure}{section}  
\theoremstyle{plain}
\theoremstyle{plain}
\newtheorem{corollary}[theorem]{Corollary} 
\theoremstyle{plain}
\newtheorem{definition}[theorem]{Definition}
\theoremstyle{plain}
\newtheorem{lemma}[theorem]{Lemma} 
\theoremstyle{plain}
\begin{document}
\title{On the Domain of Four-Dimensional Forward Difference Matrix in Some Double Sequence Spaces}
\author{Orhan Tu\v{g}}
\author{Eberhard Malkowsky}
\author{Viladimir Rako\v{c}evi\'{c}}
\author{Bipan Hazarika}
\subjclass[2010]{46A45, 40C05.}
\keywords{Four-dimensional forward difference matrix; matrix domain; double sequence spaces; alpha-dual; beta-dual; gamma-dual; matrix transformations}
\address[Orhan Tu\v{g}]{Department of Mathematics Education, Tishk International University, Erbil, Iraq}
\email{orhan.tug@tiu.edu.iq}
\address[Eberhard Malkowsky]{Faculty of Management, University Union Nikola Tesla, 11000 Belgrade, Serbia}
\email{ema@pmf.ni.ac.rs; Eberhard.Malkowsky@math.uni-giessen.de}
\address[Viladimir Rako\v{c}evi\'{c}]{Department of Mathematics, Faculty of Sciences and Mathematics University of Ni\v{s}, Vi\v{s}egradska 33, 18000, Ni\v{s}-Serbia}
\email{vrakoc@bankerinter.net}
\address[Bipan Hazarika]{Department of Mathematics, Gauhati University, Gauhati, India.}
\email{bh\_gu@gauhati.ac.in}

\begin{abstract}
In this paper, we introduce some new double sequence spaces $\mathcal{M}_u(\Delta)$ and  $\mathcal{C}_{\vartheta}(\Delta)$, where $\vartheta\in\{bp,bp0,r,r0\}$ as the domains of the four-dimensional forward difference matrix in the double sequence spaces $\mathcal{M}_u$ and $\mathcal{C}_{\vartheta}$, respectively. Then we investigate some topological and algebraic properties. Moreover, we determine the $\alpha-$, $\beta(\vartheta)-$, and $\gamma-$duals of the new spaces $\mathcal{M}_u(\Delta)$ and  $\mathcal{C}_{\vartheta}(\Delta)$. Finally, we characterize four-dimensional matrix classes $(\lambda(\Delta),\mu)$ and $(\mu,\lambda(\Delta))$, where $\lambda=\{\mathcal{M}_u,\mathcal{C}_{\vartheta}\}$ and $\mu=\{\mathcal{M}_u,\mathcal{C}_{\vartheta}\}$.
\end{abstract}\maketitle

\section{Introduction}
By $\Omega:=\{x=(x_{mn}): x_{mn}\in\mathbb{C},~~\forall m,n\in\mathbb{N}\}$, we denote the set of all complex valued double sequences; $\Omega$ is a vector space with coordinatewise addition and scalar multiplication and any vector subspace of $\Omega$ is called a double sequence space. A double sequence $x=(x_{mn})$ is called convergent in Pringsheim's sense to a limit point $L$, if for every $\epsilon>0$ there exists a natural number $n_0=n_0(\epsilon)$ and $L\in\mathbb{C}$ such that $|x_{mn}-L|<\epsilon$ for all $m,n>n_0$,	where $\mathbb{C}$ denotes the complex field; this is denoted by $L=p-\lim_{m,n\to\infty}x_{mn}$. The space of all double sequences that are convergent in the Pringsheim sense is denoted by $\mathcal{C}_p$ which is a linear space with coordinatewise addition and scalar multiplication. M\`{o}ricz \cite{Moricz} proved that the double sequence space $\mathcal{C}_p$ is a complete seminormed space with the seminorm
\begin{eqnarray*}
	\|x\|_{\infty}=\lim_{N\to\infty}\sup_{m,n\geq N}|x_{mn}|.
\end{eqnarray*}

The space of all null double sequences in Pringsheim's sense is denoted by $\mathcal{C}_{p0}$. 

A double sequence $x=(x_{mn})$ of complex numbers is called bounded if $\|x\|_{\infty}=\sup_{m,n\in\mathbb{N}}|x_{mn}|<\infty$, where $\mathbb{N}=\{0,1,2,\cdots\}$, and the space of all bounded double sequences is denoted by $\mathcal{M}_u$, that is,
\begin{eqnarray*}
	\mathcal{M}_u:=\{x=(x_{mn})\in\Omega: \|x\|_{\infty}=\sup_{m,n\in\mathbb{N}}|x_{m,n}|<\infty\};
\end{eqnarray*}
it is a Banach space with the norm $\|\cdot\|_{\infty}$.

Unlike as in the case of single sequences there are double sequences which are convergent in Pringsheim's sense but unbounded. That is, the set $\mathcal{C}_p\setminus\mathcal{M}_u$ is not empty. Boos \cite{BJ} defined the sequence $x=(x_{mn})$ by
\begin{eqnarray*}
	x_{mn}=\left\{
	\begin{array}{ccl}
		n&, & m=0, n\in\mathbb{N} \\
		0&, & m\geq1, n\in\mathbb{N},
	\end{array}
	\right.
\end{eqnarray*}
which is obviously in $\mathcal{C}_p$, i.e., $p-\lim_{m,n\rightarrow\infty}x_{mn}=0$, but not in the set $\mathcal{M}_u$, i.e., $\|x\|_{\infty}=\sup_{m,n\in\mathbb{N}}|x_{mn}|=\infty$. Thus,  $x\in\mathcal{C}_p\setminus\mathcal{M}_u $.

We also consider the set $\mathcal{C}_{bp}$ of double sequences which are both convergent in Pringsheim's sense and bounded, that is,
\begin{eqnarray*}
	\mathcal{C}_{bp}:=\mathcal{C}_{p}\cap
	\mathcal{M}_{u}=
	\left\{x=(x_{mn})\in\mathcal{C}_{p}:
	\|x\|_{\infty}=
	\sup\limits_{m,n\in\mathbb{N}}
	|x_{mn}|<\infty\right\}.
\end{eqnarray*}
The set $\mathcal{C}_{bp}$ is a Banach space with the norm
\begin{eqnarray*}
	\|x\|_{\infty}=\sup_{m,n\in\mathbb{N}}|x_{mn}|<\infty.
\end{eqnarray*}

Hardy \cite{HGH} called a sequence in the space $\mathcal{C}_p$ regularly convergent if it is a convergent single sequence with respect to each index. We denote the set of such double sequences by $\mathcal{C}_r$, that is,
\begin{eqnarray*}
	\mathcal{C}_r:=\{x=(x_{mn})\in
	\mathcal{C}_{p}:
	\forall_{m\in\mathbb{N}}
	(x_{mn})_{m}\in c,\textit{ and }
	\forall_{n\in\mathbb{N}}
	(x_{mn})_{n}\in c\},
\end{eqnarray*}
where $c$ denotes the set of all convergent single sequences of complex
numbers. Regular convergence requires the boundedness of double sequences; this is the main difference between regular convergence and the convergence in Pringsheim's sense. We also use the notations  $\mathcal{C}_{bp0}=\mathcal{M}_{u}\cap\mathcal{C}_{p0}$ and $\mathcal{C}_{r0}=\mathcal{C}_{r}\cap\mathcal{C}_{p0}$.

Throughout the text, unless otherwise stated we mean by the summation $\sum_{kl}x_{kl}$ without limits run from $0$ to $\infty$ is $\sum_{k,l=0}^{\infty}x_{kl}$.

The space $\mathcal{L}_q$ of all absolutely $q-$summable double sequences was introduced by Ba\c{s}ar and Sever \cite{FBYS} as follows
\begin{eqnarray*}
	\mathcal{L}_q:=\left\{x=(x_{kl})\in\Omega:\sum_{k,l}|x_{kl}|^q<\infty\right\},\quad (1\leq q<\infty)
\end{eqnarray*}
which is a Banach space with the norm $\|\cdot\|_q$ defined by
\begin{equation*}
	\|x\|_q=\left(\sum_{k,l}|x_{kl}|^q\right)^{1/q}.
\end{equation*}
Moreover, Zeltser \cite{ZM} introduced the space $\mathcal{L}_u$ which is the special case of the space $\mathcal{L}_q$ for $q=1$.

The double sequence spaces $\mathcal{BS}$, $\mathcal{CS}_{\vartheta}$, where $\vartheta\in\{p, bp, r\}$, and $\mathcal{BV}$ were introduced by Altay and Ba\c{s}ar \cite{Altay}. The set $\mathcal{BS}$ of all double series whose sequences of partial sums are bounded is defined by
\begin{equation*}
	\mathcal{BS}=\left\{x=(x_{kl})\in\Omega:\sup_{m,n\in\mathbb{N}}|s_{mn}|<\infty\right\}
\end{equation*}
where the sequence $s_{mn}=\sum_{k,l=0}^{m,n}x_{kl}$ is the $(m,n)-th$ partial sum of the series. The series space $\mathcal{BS}$ is a Banach space with norm defined as
\begin{equation}\label{eq2.9999}
	\|x\|_{\mathcal{BS}}=\sup_{m,n\in\mathbb{N}}\left|\sum_{k,l=0}^{m,n}x_{kl}\right|,
\end{equation}
which is linearly isomorphic to the sequence space $\mathcal{M}_u$. The set $\mathcal{CS_{\vartheta}}$ of all series whose sequences of partial sums are $\vartheta-$convergent in Pringsheim's sense is defined by
\begin{equation*}
	\mathcal{CS_{\vartheta}}=\left\{x=(x_{kl})\in\Omega: (s_{mn})\in\mathcal{C_{\vartheta}}\right\}
\end{equation*}
where $\vartheta\in\{p, bp, r\}$. The space $\mathcal{CS}_p$ is a complete seminormed space with the seminorm defined by
\begin{equation*}
	\|x\|_\infty=\lim_{n\to\infty}\left(\sup_{k,l\geq n}\left|\sum_{i,j=0}^{k,l}x_{ij}\right|\right),
\end{equation*}
which is isomorphic to the  sequence space $\mathcal{C}_p$. Moreover, the sets $\mathcal{CS}_{bp}$ and $\mathcal{CS}_{r}$ are also Banach spaces with the norm (\ref{eq2.9999}) and the inclusion $\mathcal{CS}_{r}\subset \mathcal{CS}_{bp}$ holds. The set $\mathcal{BV}$ of all double sequences of bounded variation is defined by Altay and Ba\c{s}ar \cite{Altay} as follows
\begin{equation*}
	\mathcal{BV}=\left\{x=(x_{kl})\in\Omega: \sum_{k,l}\left|x_{kl}-x_{k-1,l}-x_{k,l-1}+x_{k-1,l-1}\right|<\infty\right\}.
\end{equation*}
The space $\mathcal{BV}$ is Banach space with the norm defined by
\begin{equation*}
	\|x\|_{\mathcal{BV}}=\sum_{k,l}\left|x_{kl}-x_{k-1,l}-x_{k,l-1}+x_{k-1,l-1}\right|,
\end{equation*}
which is linearly isomorphic to the space $\mathcal{L}_u$ of absolutely convergent double series. Moreover, the inclusions $\mathcal{BV}\subset \mathcal{C_{\vartheta}}$ and $\mathcal{BV}\subset \mathcal{M}_u$ strictly hold.

Let $E$ be any double sequence space. Then,
\begin{eqnarray*}
	&&dE:=\left\{x=(x_{kl})\in\Omega:\left\{\frac{1}{kl}x_{kl}\right\}_{k,l\in\mathbb{N}}\in E \right\},\\
	&&\int E:=\left\{x=(x_{kl})\in\Omega:\left\{klx_{kl}\right\}_{k,l\in\mathbb{N}}\in E \right\},\\
	&&E^{\beta(\vartheta)}:=\bigg\{a=(a_{kl})\in\Omega:\left\{a_{kl}x_{kl}\right\}\in \mathcal{CS}_{\vartheta},\textit{ for every }x=(x_{kl})\in E\bigg\},\\
	&&E^{\alpha}:=\bigg\{a=(a_{kl})\in\Omega:\left\{a_{kl}x_{kl}\right\}\in \mathcal{L}_{u},\textit{ for every }x=(x_{kl})\in E\bigg\},\\
	&&E^{\gamma}:=\bigg\{a=(a_{kl})\in\Omega:\left\{a_{kl}x_{kl}\right\}\in \mathcal{BS},\textit{ for every }x=(x_{kl})\in E\bigg\}.
\end{eqnarray*}
Therefore, let $E_1$ and $E_2$ are arbitrary double sequences with $E_2\subset E_1$ then the inclusions $E_1^{\alpha}\subset E_2^{\alpha}$, $E_1^{\gamma}\subset E_1^{\alpha}$ and $E_1^{\beta(\vartheta)}\subset E_1^{\alpha}$ hold. But the inclusion $E_1^{\gamma}\subset E_1^{\beta(\vartheta)}$ does not hold, since $\mathcal{C}_p\setminus\mathcal{M}_u$ is not empty.\\

Let 
$A=(a_{mnkl})_{m,n,k,l\in\mathbb{N} }$
be an infinite four--dimensional matrix and $E_{1}$, $E_{2}\in\Omega$. We write
\begin{equation}
	\label{eq1.a}
	y_{mn}=A_{mn}(x)=\vartheta-   
	\sum_{k,l}a_{mnk}x_{kl} 
	\mbox{ for each }m,n\in\mathbb{N}.
\end{equation}
We say that $A$ defines a matrix transformation from $E_{1}$ to $E_{2}$ if
\begin{eqnarray}\label{eq1.b}
	A(x)=(A_{mn}(x))_{m,n}\in E_{2}
	\mbox{ for all }x\in E_{1}.
\end{eqnarray}

The $\vartheta-$summability domain $E_A^{(\vartheta)}$ of a four-dimensional infinite matrix $A$ in a double sequence space $E$ is defined by
\begin{eqnarray*}\label{eq1.c}
	E_A^{(\vartheta)}=\left\{x=(x_{kl})\in\Omega: Ax=\left(\vartheta-\sum_{k,l}a_{mnkl}x_{kl}\right)_{m,n\in\mathbb{N}}\textit{exists and is in }E\right\},
\end{eqnarray*}
which is a sequence space. The above notation (\ref{eq1.b}) says that $A=(a_{mnkl})_{m,n,k,l\in\mathbb{N}}$ maps the space $E_1$ into the space $E_2$ if $E_1\subset (E_2)_A^{(\vartheta)}$ and we denote the set of all four-dimensional matrices that map the space $E_1$ into the space $E_2$ by $(E_1:E_2)$. Thus, $A\in (E_1:E_2)$ if and only if the double series on the right side of (\ref{eq1.b}) $\vartheta-$converges for each $m,n\in\mathbb{N}$, i.e, $A_{mn}\in (E_1)^{\beta(\vartheta)}$ for all $m,n\in\mathbb{N}$ and we have $Ax\in E_2$ for all $x\in E_1$. \\

Adams \cite{ACR} defined that the four-dimensional infinite matrix $A=(a_{mnkl})$ is a triangular matrix if $a_{mnkl}=0$ for $k>m$ or $l>n$ or both. We also say by \cite{ACR} that a triangular matrix $A=(a_{mnkl})$ is called a triangle if $a_{mnmn}\neq0$ for all $m,n\in\mathbb{N}$. One can be observed easily that if $A$ is triangle, then $E_A^{(\vartheta)}$ and $E$ are linearly isomorphic.\\

Wilansky \cite[Theorem 4.4.2, p. 66]{AW} defined that if $E$ is a sequence space, then the continuous dual $E_A^{*}$ of the space $E_A$ is given by
\begin{eqnarray*}
	E_A^{*}=\{f:f=g\circ A, g\in E^{*}\}.
\end{eqnarray*}

Zeltser \cite{ZM2} stated the notations of the double sequences $e^{kl}=(e_{mn}^{kl}), e^1, e_k$ and $e$ by
\begin{eqnarray*}
	e_{mn}^{kl}=\left\{
	\begin{array}{ccl}
		1&, & (k,l)=(m,n); \\
		0&, & otherwise.
	\end{array}
	\right.
\end{eqnarray*}

\begin{eqnarray*}
	&&e^1=\sum_ke^{kl};  \textit{the double sequence that all terms of l-th column are one and} \\
	&&\textit{other terms are zero},\\
	&&e_k=\sum_le^{kl};  \textit{the double sequence that all terms of k-th row are one and other}\\
	&&\textit{terms are zero},\\
	&&e=\sum_{kl}e^{kl}; \textit{the double sequence that all terms are one} \end{eqnarray*}
for all $k,l,m,n\in\mathbb{N}$.\\

The four-dimensional forward difference matrix $\Delta=(\delta_{mnkl})$ is defined by
\begin{eqnarray*}
	\delta_{mnkl}:=\left\{
	\begin{array}{ccl}
		(-1)^{m+n-k-l}&, & m\leq k\leq m+1,~ n\leq l\leq n+1, \\
		0&, & \textrm{otherwise}
	\end{array}\right.
\end{eqnarray*}
for all $m,n,k,l\in\mathbb{N}$. The $\Delta-$transform of a double sequence $x=(x_{mn})$ is given by
\begin{eqnarray}\label{eq1.3}
	y_{mn}:=\{\Delta x\}_{mn}=x_{mn}-x_{m+1,n}-x_{m,n+1}+x_{m+1,n+1}\notag
\end{eqnarray}
for all $m,n\in\mathbb{N}$. We shall briefly discuss $\Delta^{-1}$ which is the inverse of four-dimensional forward difference matrix $\Delta$, where $(\Delta^{-1}\Delta)(x_{kl})=x_{kl}$. Let $\Delta^{-1}y_{kl}=x_{kl}$. Then we can show that $x_{kl}$ is a finite summation of the original double sequence $y_{kl}$.
\begin{eqnarray}\label{eq1.33}
	\Delta(\Delta^{-1}y_{kl})=\Delta x_{kl}=x_{kl}-x_{k+1,l}-x_{k,l+1}+x_{k+1,l+1}.
\end{eqnarray}
If we write the equation (\ref{eq1.33}) for $y_{00}, y_{01},y_{10},...,y_{kl}$

\begin{eqnarray*}
	\Delta(\Delta^{-1}y_{00})&=&\Delta x_{00}=x_{00}-x_{10}-x_{01}+x_{11}\\
	\Delta(\Delta^{-1}y_{01})&=&\Delta x_{01}=x_{01}-x_{11}-x_{02}+x_{12}\\
	\Delta(\Delta^{-1}y_{10})&=&\Delta x_{10}=x_{10}-x_{20}-x_{11}+x_{21}\\
	\Delta(\Delta^{-1}y_{11})&=&\Delta x_{11}=x_{11}-x_{21}-x_{12}+x_{22}\\
	&\vdots&\\
	\Delta(\Delta^{-1}y_{kl})&=&\Delta x_{kl}=x_{kl}-x_{k+1,l}-x_{k,l+1}+x_{k+1,l+1}.\\
\end{eqnarray*}
Then we add the left hand sides up to $y_{00}+ y_{01}+y_{10}+...+y_{kl}$
\begin{eqnarray*}
	\sum_{i,j=0}^{k,l}y_{i,j}=x_{k+1,l+1}+x_{00}-x_{k+1,0}-x_{0,l+1}
\end{eqnarray*}
for all $k, l\in\mathbb{N}$. To be able to have $x_{kl}$ instead of having $x_{k+1,l+1}$ we must write it as
\begin{eqnarray}\label{eq2.22}
	x_{kl}=\sum_{i,j=0}^{k-1,l-1}y_{i,j}-x_{00}+x_{k,0}+x_{0,l}
\end{eqnarray}
for all $k, l\in\mathbb{N}$. With this result we can introduce the role of inverse four-dimensional forward difference operator $\Delta^{-1}$ on the double sequence $y_{kl}$, where $x_{kl}=\Delta^{-1}y_{kl}$, as the $(k-1,l-1)^{th}-$partial sum of the double sequence $y_{kl}$ plus arbitrary constants on the first row and the first column of the double sequence $x=(x_{kl})$.

\section{New double sequence spaces}
In this section, we introduce new double sequence spaces  $\mathcal{M}_u(\Delta)$,  $\mathcal{C}_{\vartheta}(\Delta)$, where $\vartheta\in\{bp,r\}$, as the matrix domains of the four-dimensional matrix of the forward differences in the sequence spaces $\mathcal{M}_u$ and $\mathcal{C}_{\vartheta}$ as follow;
\begin{eqnarray*}
	&&\mathcal{M}_u(\Delta):=\left\{x=(x_{kl})\in\Omega:\sup_{k,l\in\mathbb{N}}\left|y_{kl}\right|<\infty\right\},\\
	&&\mathcal{C}_{\vartheta}(\Delta):=\left\{x=(x_{kl})\in\Omega:\exists {L\in\mathbb{C}}\ni \vartheta-\lim_{k,l\to\infty}\left|y_{kl}-L\right|=0\right\},\\
	&&\mathcal{C}_{\vartheta 0}(\Delta):=\left\{x=(x_{kl})\in\Omega: \vartheta-\lim_{k,l\to\infty}\left|y_{kl}\right|=0\right\},
\end{eqnarray*}
where $y_{kl}=\Delta x_{kl}=(x_{kl}-x_{k+1,l}-x_{k,l+1}+x_{k+1,l+1})$ for all $k,l\in\mathbb{N}$.

\begin{theorem}\label{thm2.1}
	The spaces $\mathcal{M}_u(\Delta)$ and $\mathcal{C}_{\vartheta}(\Delta)$, where $\vartheta\in\{bp,bp0,r,r0\}$ are Banach spaces with the norm
	\begin{eqnarray}\label{eq2.1}
		\|x\|_{\mathcal{M}_u(\Delta)}&:=&|x_{k,0}+x_{0,l}-x_{00}|+\|\Delta x\|_{\mathcal{M}_u}\\
		\nonumber&:=&|x_{k,0}+x_{0,l}-x_{00}|+\sup_{k,l\in\mathbb{N}}\left|x_{kl}-x_{k+1,l}-x_{k,l+1}+x_{k+1,l+1}\right|.
	\end{eqnarray}
	
\end{theorem}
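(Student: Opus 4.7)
The plan is to exhibit an isometric linear isomorphism between $\mathcal{M}_u(\Delta)$ and a product of two copies of $\mathcal{M}_u$, so that the Banach property transfers from $(\mathcal{M}_u,\|\cdot\|_\infty)$. Reading the first summand of the defining expression (\ref{eq2.1}) as $\sup_{k,l}|x_{k,0}+x_{0,l}-x_{00}|$, both summands are of sup-type, so subadditivity and absolute homogeneity follow at once from the corresponding properties of $\|\cdot\|_\infty$. For definiteness, if $\|x\|_{\mathcal{M}_u(\Delta)}=0$ then $\Delta x\equiv 0$ and $x_{k,0}+x_{0,l}-x_{00}\equiv 0$; substituting these into the reconstruction identity (\ref{eq2.22}) forces $x_{kl}=0$ for all $k,l$, so the expression is indeed a norm.

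For completeness, I would take a Cauchy sequence $(x^{(n)})_{n\in\mathbb{N}}$ in $\mathcal{M}_u(\Delta)$. By the two summands of the norm, $(\Delta x^{(n)})_n$ is Cauchy in $(\mathcal{M}_u,\|\cdot\|_\infty)$ and $(b(x^{(n)}))_n$ with $b(x)_{kl}:=x_{k,0}+x_{0,l}-x_{00}$ is Cauchy in $(\mathcal{M}_u,\|\cdot\|_\infty)$. Completeness of $\mathcal{M}_u$ gives a limit $y\in\mathcal{M}_u$ for the first, and specialising the second at $(0,0)$, $(0,l)$ and $(k,0)$ yields the uniform-in-index limits $\alpha:=\lim_n x^{(n)}_{00}$, $\beta_l:=\lim_n x^{(n)}_{0,l}$, $\alpha_k:=\lim_n x^{(n)}_{k,0}$. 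I would then define $x=(x_{kl})$ by plugging these data into (\ref{eq2.22}), namely $x_{kl}:=\sum_{i,j=0}^{k-1,l-1}y_{ij}-\alpha+\alpha_k+\beta_l$, and verify directly that $\Delta x=y$ entry-by-entry and that $b(x)_{kl}=\alpha_k+\beta_l-\alpha$. Since the boundary data converge uniformly and each partial sum in (\ref{eq2.22}) is finite, $x^{(n)}_{kl}\to x_{kl}$ pointwise and, more importantly, $\|x^{(n)}-x\|_{\mathcal{M}_u(\Delta)}\to 0$, which proves that $x\in\mathcal{M}_u(\Delta)$ and closes the completeness argument for $\mathcal{M}_u(\Delta)$.

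For the four spaces $\mathcal{C}_\vartheta(\Delta)$ I would argue that each is a closed subspace of the Banach space $\mathcal{M}_u(\Delta)$. Indeed, $x\in\mathcal{C}_\vartheta(\Delta)$ if and only if $\Delta x\in\mathcal{C}_\vartheta$, and each $\mathcal{C}_\vartheta$ with $\vartheta\in\{bp,bp0,r,r0\}$ is a closed subspace of $(\mathcal{M}_u,\|\cdot\|_\infty)$; since convergence in the norm (\ref{eq2.1}) implies convergence of $\Delta x^{(n)}$ in $\|\cdot\|_\infty$, the limit $y=\Delta x$ remains in $\mathcal{C}_\vartheta$, whence $\mathcal{C}_\vartheta(\Delta)$ is closed and hence Banach.

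The main obstacle I expect is the bookkeeping around (\ref{eq2.22}): one must check that the reconstructed $x$ actually satisfies $\Delta x=y$ (this is a direct but tedious telescoping in the two indices) and that the three sets of boundary limits $\alpha$, $\alpha_k$, $\beta_l$ arising from the Cauchy condition on $b(x^{(n)})$ are mutually consistent with the pointwise limits $\lim_n x^{(n)}_{kl}$. Apart from this finite-support calculation, every step reduces to properties already available for $(\mathcal{M}_u,\|\cdot\|_\infty)$ and its closed subspaces $\mathcal{C}_\vartheta$, so no deeper machinery is needed.
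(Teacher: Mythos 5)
Your proof is correct and, in fact, more complete than the paper's, but it is organized differently. The paper runs a bare-hands Cauchy argument: from $\|x^{i}-x^{j}\|_{\mathcal{M}_u(\Delta)}\to 0$ it \emph{asserts} that $|x^{i}_{kl}-x^{j}_{kl}|\to 0$ for each fixed $(k,l)$, takes the pointwise limits $x_{kl}$, runs an $\epsilon/4$ estimate on the four pieces of the norm to get $\|x^{i}-x\|_{\mathcal{M}_u(\Delta)}<\epsilon$, and finally checks $\sup_{k,l}|\Delta x_{kl}|<\infty$ by the triangle inequality; it treats only $\mathcal{M}_u(\Delta)$ and says nothing about the four spaces $\mathcal{C}_{\vartheta}(\Delta)$. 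You instead split the norm into the boundary functional $b(x)_{kl}=x_{k,0}+x_{0,l}-x_{00}$ and the image $\Delta x$, transfer completeness from two copies of $(\mathcal{M}_u,\|\cdot\|_\infty)$, and reconstruct the limit explicitly through the inversion identity (\ref{eq2.22}); the consistency worry you raise about $\alpha$, $\alpha_k$, $\beta_l$ resolves itself since $b(x)_{k,0}=x_{k,0}$, $b(x)_{0,l}=x_{0,l}$ and $b(x)_{00}=x_{00}$. What your route buys is precisely the step the paper skips: the coordinatewise Cauchy property of $(x^{i}_{kl})_i$ is \emph{not} immediate from the norm but follows only via (\ref{eq2.22}), which is exactly the bookkeeping you isolate; and your closed-subspace argument disposes of all four cases $\vartheta\in\{bp,bp0,r,r0\}$ at once, which the paper leaves untreated. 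One caveat you share with the paper: the first summand of (\ref{eq2.1}) must be read as $\sup_{k,l}|x_{k,0}+x_{0,l}-x_{00}|$ to be well defined, and on $\mathcal{M}_u(\Delta)$ as literally defined (only $\sup_{k,l}|\Delta x_{kl}|<\infty$ is required) this supremum can be infinite --- e.g.\ $x_{kl}=k$ gives $\Delta x\equiv 0$ but unbounded boundary values --- so strictly speaking the functional is a norm only on the subspace where it is finite; this is a defect of the statement, not of your argument.
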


\begin{proof}
	The linearity of those spaces is clear. Suppose that $x^i=(x_{kl}^i)$ is a Cauchy sequence in the space $\mathcal{M}_u(\Delta)$ for all $k,l\in\mathbb{N}$. Then
	\begin{eqnarray*}
		\|x^i-x^j\|_{\mathcal{M}_u(\Delta)}&=&|(x_{k,0}^i-x_{k,0}^j)+(x_{0,l}^i- x_{0,l}^j)-(x_{00}^i-x_{00}^j)|\\
		&&+\sup_{k,l\in\mathbb{N}}|\Delta (x_{kl}^i-x_{kl}^j)|\to 0
	\end{eqnarray*}
	as $i,j\to\infty$. Thus, we obtain $|x_{kl}^i-x_{kl}^j|\to 0$ for $i,j\to\infty$ and for every $k,l\in\mathbb{N}$. Hence $x^i=(x_{kl}^i)$ is a Cauchy sequence in $\mathbb{C}$ for each $k,l\in\mathbb{N}$. Since $\mathbb{C}$ is complete, then it converges to a sequence $x=(x_{kl})$, i.e., we have
	\begin{eqnarray*}
		\lim_{i\to\infty}x_{kl}^i=x_{kl}
	\end{eqnarray*}
	for each $k,l\in\mathbb{N}$. Therefore, for every $\epsilon>0$, there exits a natural number $N(\epsilon)$, such that for all $i,j\geq N(\epsilon)$, and for all $k,l\in\mathbb{N}$ we have 
	
	\begin{eqnarray*}
		|x_{k,0}^i-x_{k,0}^j|<\frac{\epsilon}{4}, ~|x_{0,l}^i-x_{0,l}^j|<\frac{\epsilon}{4},~|x_{0,0}^i-x_{0,0}^j|<\frac{\epsilon}{4},~|\Delta (x_{kl}^i-x_{kl}^j)|<\frac{\epsilon}{4}.
	\end{eqnarray*}
	Moreover, 
	\begin{eqnarray*}
		&&\lim_{j\to\infty}|x_{k,0}^i-x_{k,0}^j|=|x_{k,0}^i-x_{k,0}|<\frac{\epsilon}{4},\\
		&&\lim_{j\to\infty}|x_{0,l}^i-x_{0,l}^j|=|x_{0,l}^i-x_{0,l}|<\frac{\epsilon}{4},\\
		&&\lim_{j\to\infty}|x_{0,0}^i-x_{0,0}^j|=|x_{0,0}^i-x_{0,0}|<\frac{\epsilon}{4},\\
		&&\lim_{j\to\infty}|\Delta (x_{kl}^i-x_{kl}^j)|=|\Delta (x_{kl}^i-x_{kl})|<\frac{\epsilon}{4}
	\end{eqnarray*}
	for all $i\geq N(\epsilon)$. Hence, we obtain that
	\begin{eqnarray*}
		\|x^i-x\|_{\mathcal{M}_u(\Delta)}&=&|(x_{k,0}^i-x_{k,0})+(x_{0,l}^i- x_{0,l})-(x_{00}^i-x_{00})|\\
		&&+\sup_{k,l\in\mathbb{N}}|\Delta (x_{kl}^i-x_{kl})|\\
		&\leq&|x_{k,0}^i-x_{k,0}|+|x_{0,l}^i- x_{0,l}|+|x_{00}^i-x_{00}|\\
		&&+\sup_{k,l\in\mathbb{N}}|\Delta(x_{kl}^i-x_{kl})|<\epsilon.
	\end{eqnarray*}
	Now we must show that $x\in\mathcal{M}_u(\Delta)$.
	\begin{eqnarray*}
		\sup_{k,l\in\mathbb{N}}|\Delta x_{kl}|&=&\sup_{k,l\in\mathbb{N}}|x_{kl}-x_{k+1,l}-x_{k,l+1}+x_{k+1,l+1}|\\
		&=&\sup_{k,l\in\mathbb{N}}\left|x_{kl}-x_{kl}^i+x_{kl}^i-x_{k+1,l}+x_{k+1,l}^i-x_{k+1,l}^i-x_{k,l+1}+x_{k,l+1}^i-x_{k,l+1}^i\right.\\
		&&\left.+x_{k+1,l+1}-x_{k+1,l+1}^i+x_{k+1,l+1}^i\right|\\
		&\leq&\sup_{k,l\in\mathbb{N}}\left|\Delta x_{kl}^i\right|+\sup_{k,l\in\mathbb{N}}|\Delta x_{kl}^i-\Delta x_{kl}|<\infty
	\end{eqnarray*}
	Hence $x=(x_{kl})\in\mathcal{M}_u(\Delta)$. This completes the proof.
	
\end{proof}

Let $\vartheta=\{bp,bp0,r,r0\}$. We define the operator $P$ form $\lambda(\Delta)$ into itself, where $\lambda\in\{\mathcal{M}_u,\mathcal{C}_{\vartheta}\}$ as 
\begin{eqnarray*}
	P:\lambda(\Delta)&\to&\lambda(\Delta)\\
	x&\to& Px=\begin{bmatrix}
		0 & 0 & 0 & 0 & \cdots\\
		0 & x_{11} & x_{12} & x_{13} & \cdots\\ 
		0 & x_{21} & x_{22} & x_{23} & \cdots\\
		0 & x_{31} & x_{32} & x_{33} & \cdots\\
		\vdots & \vdots & \vdots & \vdots & \ddots
	\end{bmatrix}
\end{eqnarray*}
for all $x=(x_{kl})\in\lambda(\Delta)$. Clearly $P$ is a linear and bounded operator on $\lambda(\Delta)$.

Now we show that the four-dimensional forward difference operator $\Delta$ is a linear homeomorphism.

\begin{eqnarray}\label{eq3.2}
	\Delta:P(\lambda(\Delta))&\to&\lambda\\
	\nonumber x&\to&\Delta x=y=(x_{kl}-x_{k+1,l}-x_{k,l+1}+x_{k+1,l+1})
\end{eqnarray}
where the set $P(\lambda(\Delta))$ is defined by
\begin{eqnarray*}
P(\lambda(\Delta)):=\{x=(x_{kl})\in\mathbb{C}:x\in\lambda(\Delta)~and~ x_{00}=x_{k,0}=x_{0,l}=0,\forall k,l\in\mathbb{N} \}\subset\lambda(\Delta)
\end{eqnarray*}
and
\begin{eqnarray*}
	\|x\|_{P(\lambda(\Delta))}=\|\Delta x\|_{\lambda}.
\end{eqnarray*}
Therefore, the spaces $P(\lambda(\Delta))$ and $\lambda$ are equivalent as topological spaces, and the $\Delta$ and $\Delta^{-1}$ are norm preserving and $\|\Delta\|=\|\Delta^{-1}\|=1$. We prove the following Lemma \ref{Lem:functional} for the case $\lambda=\mathcal{C}_{r0}$ by using the results in \cite[Theorem 5., Remark 3., P.132]{Moricz}. Since the proofs of the other cases are similar to that of following Lemma \ref{Lem:functional}, we left them as an exercise to the reader.

\begin{lemma}\label{Lem:functional}
	A linear functional $f_{\Delta}$ on $P(\mathcal{C}_{r0}(\Delta))$ is continuous if and only if there exists a double sequence $a=(a_{kl})_{k,l\geq 1}\in\mathcal{L}_u$ such that 
	\begin{equation}\label{func. eq.}
		f_{\Delta}(x)=\sum_{k,l=1}^{\infty}a_{kl}(\Delta x)_{kl}
	\end{equation}
	for all $x\in P(\mathcal{C}_{r0}(\Delta))$.
\end{lemma}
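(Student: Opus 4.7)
My plan is to dualize through the isometric isomorphism $\Delta:P(\mathcal{C}_{r0}(\Delta))\to\mathcal{C}_{r0}$ coming from (\ref{eq3.2}) and then invoke the cited characterization of $(\mathcal{C}_{r0})^{*}$ from M\`oricz.

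First I would record the transfer mechanism. The restriction of $\Delta$ to $P(\mathcal{C}_{r0}(\Delta))$ is a linear norm-preserving bijection onto $\mathcal{C}_{r0}$, with inverse obtained from (\ref{eq2.22}) under the boundary conditions $x_{00}=x_{k,0}=x_{0,l}=0$. Since the norm on $P(\mathcal{C}_{r0}(\Delta))$ is precisely $\|x\|_{P(\mathcal{C}_{r0}(\Delta))}=\|\Delta x\|_{\mathcal{C}_{r0}}$, a linear functional $f_{\Delta}$ is continuous on $P(\mathcal{C}_{r0}(\Delta))$ if and only if $g:=f_{\Delta}\circ\Delta^{-1}$ is a bounded linear functional on $\mathcal{C}_{r0}$.

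Next I would apply Theorem 5 together with Remark 3 of \cite{Moricz} (p.~132), which identify the continuous dual of $\mathcal{C}_{r0}$ with $\mathcal{L}_{u}$: every continuous $g$ on $\mathcal{C}_{r0}$ has a unique representation $g(y)=\sum_{k,l}a_{kl}y_{kl}$ with $a=(a_{kl})\in\mathcal{L}_{u}$, and conversely every such $a$ yields a continuous functional. Substituting $y=\Delta x$ then gives the representation (\ref{func. eq.}) for $f_{\Delta}$. For the converse direction, given $a\in\mathcal{L}_{u}$, the bound
\begin{equation*}
|f_{\Delta}(x)|\le\sum_{k,l}|a_{kl}|\,|(\Delta x)_{kl}|\le\|a\|_{\mathcal{L}_{u}}\|\Delta x\|_{\infty}=\|a\|_{\mathcal{L}_{u}}\|x\|_{P(\mathcal{C}_{r0}(\Delta))}
\end{equation*}
gives continuity, and linearity is immediate.

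The step I expect to require the most care is the bookkeeping of the index set. Formula (\ref{func. eq.}) sums over $k,l\ge 1$, whereas $\mathcal{L}_{u}$ and $\mathcal{C}_{r0}$ are indexed over all of $\mathbb{N}\times\mathbb{N}$, so I have to justify that the ``boundary" contributions $(\Delta x)_{0,0}$, $(\Delta x)_{k,0}$, $(\Delta x)_{0,l}$ can be absorbed. For $x\in P(\mathcal{C}_{r0}(\Delta))$ the conditions $x_{00}=x_{k,0}=x_{0,l}=0$ force $(\Delta x)_{0,0}=x_{11}$, $(\Delta x)_{k,0}=x_{k+1,1}-x_{k,1}$, and $(\Delta x)_{0,l}=x_{1,l+1}-x_{1,l}$, so these boundary terms telescope and can be re-indexed into sums over $k,l\ge 1$; after this rearrangement the absolutely convergent sum $g(\Delta x)$ takes the form asserted in (\ref{func. eq.}). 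Apart from this routine but slightly fussy relabeling, the lemma is a direct dualization of M\`oricz's theorem through the isometric isomorphism $\Delta$.
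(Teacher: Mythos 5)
Your proof follows essentially the same route as the paper's: establish that $\Delta$ is an isometric isomorphism from $P(\mathcal{C}_{r0}(\Delta))$ onto $\mathcal{C}_{r0}$, transfer the functional via $f_{\Delta}\circ\Delta^{-1}$, invoke M\'{o}ricz's representation of $(\mathcal{C}_{r0})^{*}$ as $\mathcal{L}_{u}$, and obtain the converse from the bound $|f_{\Delta}(x)|\le\|a\|_{\mathcal{L}_{u}}\|x\|_{P(\mathcal{C}_{r0}(\Delta))}$. Your additional care with the index range ($k,l\ge 1$ versus all of $\mathbb{N}\times\mathbb{N}$) addresses a point the paper itself leaves inconsistent, switching between $\sum_{k,l=0}^{\infty}$ and $\sum_{k,l=1}^{\infty}$ without comment, but this does not change the substance of the argument.
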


\begin{proof}
	First we show that $\Delta: P(\mathcal{C}_{r0}(\Delta))\to\mathcal{C}_{r0}$, $\Delta x_{kl}=x_{kl}-x_{k+1,l}-x_{k,l+1}+x_{k+1,l+1}$ with $x_{00}=x_{k,0}=x_{0,l}=0$ for each $k,l\in\mathbb{N}$ is an isometric linear isomorphism, that is, we prove that $\Delta$ is a bijection between $P(\mathcal{C}_{r0}(\Delta))$ and $\mathcal{C}_{r0}$ by $\Delta x_{kl}=x_{kl}-x_{k+1,l}-x_{k,l+1}+x_{k+1,l+1}$ with $x_{00}=x_{k,0}=x_{0,l}=0$ for each $k,l\in\mathbb{N}$. Linearity is clear. Moreover, $x=0$ whenever $\Delta x=0$, and hence $\Delta$ is injective. 
	Now suppose that $y=(y_{kl})\in\mathcal{C}_{r0}$, we define the sequence $x=(x_{kl})$ by $x_{kl}=\sum_{i,j=0}^{k-1,l-1}y_{ij}$ with $x_{00}=x_{k,0}=x_{0,l}=0$ for each $k,l\in\mathbb{N}$. Then we have,
	\begin{eqnarray*}
		\|x\|_{P(\mathcal{C}_{r0}(\Delta))}&=&\sup_{k,l\in\mathbb{N}}|\Delta x_{kl}|\\
		&=&\sup_{k,l\in\mathbb{N}}\left|\Delta\left(\sum_{i,j=0}^{k-1,l-1}y_{ij}\right) \right|\\
		&=&\sup_{k,l\in\mathbb{N}}\left|\sum_{i,j=0}^{k-1,l-1}y_{ij}-\sum_{i,j=0}^{k,l-1}y_{ij}-\sum_{i,j=0}^{k-1,l}y_{ij}+\sum_{i,j=0}^{k,l}y_{ij} \right|\\
		&=&\sup_{k,l\in\mathbb{N}}\left|\sum_{i,j=0}^{k-1,l-1}y_{ij}-\left(\sum_{j=0}^{l-1}y_{kj}+\sum_{i,j=0}^{k-1,l-1}y_{ij}\right)\right.\\
		&&\left.-\left(\sum_{i=0}^{k-1}y_{il}+\sum_{i,j=0}^{k-1,l-1}y_{ij}\right)\right.\\
		&&\left.+\left(\sum_{j=0}^{l-1}y_{kj}+\sum_{i=0}^{k-1}y_{il}+\sum_{i,j=0}^{k-1,l-1}y_{ij}+y_{kl}\right) \right|\\
		&=&\sup_{k,l\in\mathbb{N}}|y_{kl}|=\|y\|_{\infty}<\infty.
	\end{eqnarray*}
	It shows that $x\in P(\mathcal{C}_{r0}(\Delta))$ and consequently $\Delta$ is surjective and norm preserving. It completes the first part of the proof.
	
	Now suppose that $f_{\Delta}$ is a linear functional on $P(\mathcal{C}_{r0}(\Delta))$. If $f_{\Delta}$ is continuous, then $f_{\Delta}\circ\Delta^{-1}$ is a continuous linear functional on $\mathcal{C}_{r0}$. Then by \cite[Remark 3.]{Moricz} there exists a double sequence $a=(a_{kl})_{k,l\geq 1}\in\mathcal{L}_u$ such that
	\begin{equation*}
		f_{\Delta}\circ\Delta^{-1}(y)=\sum_{k,l=0}^{\infty}a_{kl}y_{kl}
	\end{equation*}
	for all $y\in\mathcal{C}_{r0}$. It gives 
	\begin{equation*}
		f_{\Delta}(x)=\left(f_{\Delta}\circ\Delta^{-1}\right)(\Delta x)=\sum_{k,l=0}^{\infty}a_{kl}(\Delta x)_{kl}
	\end{equation*}
	for all $x\in P(\mathcal{C}_{r0}(\Delta))$. Conversely, if $f_{\Delta}(x)=\sum_{k,l=1}^{\infty}a_{kl}(\Delta x)_{kl}$ for all $x\in P(\mathcal{C}_{r0}(\Delta))$ and for some $a=(a_{kl})\in\mathcal{L}_u$, then
	\begin{eqnarray*}
		\left|f_{\Delta}(x)\right|=\left|\sum_{k,l=0}^{\infty}a_{kl}(\Delta x)_{kl}\right|&\leq&\sum_{k,l=1}^{\infty}|a_{kl}||(\Delta x)_{kl}|\\
		&\leq&\|x\|_{ P(\mathcal{C}_{r0}(\Delta))}\sum_{k,l=0}^{\infty}|a_{kl}|\\
		&=&\|x\|_{P(\mathcal{C}_{r0}(\Delta))}\|a\|_{\mathcal{L}_u}.
	\end{eqnarray*}
	Therefore, $\|f_{\Delta}\|\leq\|a\|_{\mathcal{L}_u}$ and then we see that $f_{\Delta}$ is a bounded(continuous) linear functional on $P(\mathcal{C}_{r0}(\Delta))$. This completes the proof.
\end{proof}

\begin{definition}
	Let $X$ and $Y$ be Banach spaces, and $\mathcal{B}(X,Y)$ be the space of bounded linear operators from $X$ into $Y$. An operator $T\in \mathcal{B}(X,Y)$ is called an isometry if $\|Tx\|=\|x\|$ for all $x\in X$.
\end{definition}
Now we denote the continuous duals of $P(\lambda(\Delta))$ and $\lambda$ by $[P(\lambda(\Delta))]^*$ and $\lambda^*$, respectively. We may now show that the operator
\begin{eqnarray*}
	T:[P(\lambda(\Delta))]^*&\to& \lambda^*\\
	f_{\Delta}&\to&f=f_{\Delta}\circ(\Delta^{-1})
\end{eqnarray*}
is a linear isometry. Hence,
$[P(\mathcal{M}_u(\Delta))]^*\cong\mathcal{M}_u^*$, by \cite[Remark 3.]{Moricz} we have $[P(\lambda(\Delta))]^*\cong\lambda^*\cong\mathcal{L}_u$, where $\lambda\in\{\mathcal{C}_{r},\mathcal{C}_{r0}\}$, by \cite[Theorem 8.]{Moricz} we have $[P(\mu(\Delta))]^*\cong\mu^*\cong\ell_1(\ell_{\infty}^*)$, where $\mu\in\{\mathcal{C}_{bp},\mathcal{C}_{bp0}\}$, and the sets $\ell_1$ and $\ell_{\infty}$ represent absolutely summable and bounded single sequence spaces, respectively. 

Now we prove the following Theorem only for the case $\lambda=\mathcal{C}_{r0}$.

\begin{theorem}
	The continuous dual $[P(\mathcal{C}_{r0}(\Delta))]^*$ is isometrically isomorphic to $\mathcal{C}_{r0}^*\cong\mathcal{L}_u$.
\end{theorem}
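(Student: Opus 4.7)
The plan is to exploit the isometric isomorphism $\Delta : P(\mathcal{C}_{r0}(\Delta)) \to \mathcal{C}_{r0}$ already established in the first half of Lemma \ref{Lem:functional}, together with M\'oricz's identification $\mathcal{C}_{r0}^* \cong \mathcal{L}_u$. The core point is that any isometric isomorphism between Banach spaces induces an isometric isomorphism between their continuous duals via pullback.

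First I would define the map
\begin{eqnarray*}
T : [P(\mathcal{C}_{r0}(\Delta))]^* \to \mathcal{C}_{r0}^*, \qquad T(f_{\Delta}) = f_{\Delta} \circ \Delta^{-1},
\end{eqnarray*}
and verify the three structural properties in sequence. Linearity of $T$ is immediate from the linearity of composition. For bijectivity, I would argue injectivity from the surjectivity of $\Delta^{-1}$ (if $f_{\Delta} \circ \Delta^{-1} \equiv 0$ on $\mathcal{C}_{r0}$, then $f_{\Delta}$ vanishes on all of $P(\mathcal{C}_{r0}(\Delta))$), and surjectivity by exhibiting the inverse: given any $f \in \mathcal{C}_{r0}^*$, set $f_{\Delta} = f \circ \Delta$, which is continuous on $P(\mathcal{C}_{r0}(\Delta))$ because $\Delta$ is bounded, and clearly satisfies $T(f_{\Delta}) = f$.

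The key step is the isometry claim. Since $\Delta$ is norm preserving with $\|\Delta\| = \|\Delta^{-1}\| = 1$, the map $y \mapsto \Delta^{-1}y$ sends the closed unit ball of $\mathcal{C}_{r0}$ bijectively onto the closed unit ball of $P(\mathcal{C}_{r0}(\Delta))$. Therefore
\begin{eqnarray*}
\|T(f_{\Delta})\|_{\mathcal{C}_{r0}^*}
&=& \sup_{\|y\|_{\mathcal{C}_{r0}}\leq 1} |f_{\Delta}(\Delta^{-1}y)|
= \sup_{\|x\|_{P(\mathcal{C}_{r0}(\Delta))}\leq 1} |f_{\Delta}(x)|
= \|f_{\Delta}\|_{[P(\mathcal{C}_{r0}(\Delta))]^*}.
\end{eqnarray*}
This gives an isometric isomorphism $[P(\mathcal{C}_{r0}(\Delta))]^* \cong \mathcal{C}_{r0}^*$. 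Chaining this with M\'oricz's representation $\mathcal{C}_{r0}^* \cong \mathcal{L}_u$ (the very representation used inside Lemma \ref{Lem:functional}), we conclude $[P(\mathcal{C}_{r0}(\Delta))]^* \cong \mathcal{L}_u$, and one may also read off the explicit representation $f_{\Delta}(x) = \sum_{k,l}a_{kl}(\Delta x)_{kl}$ with $a \in \mathcal{L}_u$, matching formula \eqref{func. eq.}.

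There is no real obstacle here: the heavy lifting was done in Lemma \ref{Lem:functional}, where it is shown that $\Delta$ is a norm-preserving bijection. The only point requiring minor care is making explicit that the unit sphere correspondence is used to transfer the operator norm, which is routine once $\|\Delta\|=\|\Delta^{-1}\|=1$ is in hand. Verifying that the constructed $f_{\Delta}=f\circ\Delta$ indeed lies in $[P(\mathcal{C}_{r0}(\Delta))]^*$ (rather than on all of $\mathcal{C}_{r0}(\Delta)$) is automatic, since $P(\mathcal{C}_{r0}(\Delta))$ is the natural domain on which $\Delta$ is an isometric isomorphism.
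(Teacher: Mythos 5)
Your proof is correct, but it reaches the conclusion by a different mechanism than the paper. You use the general functional-analytic fact that an isometric isomorphism $\Delta : P(\mathcal{C}_{r0}(\Delta))\to\mathcal{C}_{r0}$ induces, by pullback $f_{\Delta}\mapsto f_{\Delta}\circ\Delta^{-1}$, an isometric isomorphism of the continuous duals (the unit balls correspond under $\Delta^{-1}$, so operator norms are preserved), and you then compose with M\'oricz's identification $\mathcal{C}_{r0}^*\cong\mathcal{L}_u$. The paper instead builds the isomorphism concretely: it maps $f_{\Delta}$ to the sequence $\left(f_{\Delta}(e^{kl})\right)_{k,l\geq 1}\in\mathcal{L}_u$, gets surjectivity from the representation $f_{\Delta}(x)=\sum_{k,l}a_{kl}(\Delta x)_{kl}$ of Lemma \ref{Lem:functional}, injectivity from the Schauder basis property of $(e^{kl})$, and proves the isometry by two explicit inequalities obtained from the basis expansion and from evaluating $f_{\Delta}$ at the $e^{kl}$. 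Your route is shorter and cleaner, sidesteps the basis manipulations entirely, and makes transparent that the only substantive inputs are the first half of Lemma \ref{Lem:functional} and M\'oricz's duality theorem; the paper's route has the advantage of exhibiting the isomorphism explicitly as a sequence in $\mathcal{L}_u$, recovering formula \eqref{func. eq.} directly. The one point you should state rather than leave implicit is that M\'oricz's identification $\mathcal{C}_{r0}^*\cong\mathcal{L}_u$ is itself isometric (it is, with $\|f\|=\|a\|_{\mathcal{L}_u}$), since otherwise the chained isomorphism would only be topological rather than isometric.
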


\begin{proof}
	Let us define an operator 
	\begin{equation*}
		T:[P(\mathcal{C}_{r0}(\Delta))]^*\to\mathcal{C}_{r0}^*\cong\mathcal{L}_u
	\end{equation*}
	with $T(f_{\Delta})=\left(f_{\Delta}(e^{kl})\right)_{k,l\geq 1}$,
	\begin{equation*}
		T\left(f_{\Delta}(x)\right)=T\left(\left(f_{\Delta}\circ\Delta^{-1}\right)(\Delta x)\right)=\sum_{k,l=1}^{\infty}a_{kl}T((\Delta x)_{kl})
	\end{equation*}
	where $a=(a_{kl})\in\mathcal{L}_u$. Therefore, $T$ is a surjective linear map by Lemma \ref{Lem:functional}. Moreover, since $T(f_{\Delta}(e^{kl}))=0=(0,0,0,...)$ implies $f_{\Delta}=0$, where $(x_{kl})=e^{kl}$ is Schauder basis for $\mathcal{C}_{r0}$ by the definition of double Schauder basis \cite[Definition 4.2., p. 14]{Net convergence}, $T$ is injective. Let $f_{\Delta}\in[P(\mathcal{C}_{r0}(\Delta))]^*$  and $x\in P(\mathcal{C}_{r0}(\Delta))$. Then we have
	\begin{eqnarray*}
		\left|f_{\Delta}(x)\right|=\left|f_{\Delta}\left(\sum_{k,l=1}^{\infty}(\Delta x)_{kl}e^{kl}\right)\right|&=&\left|\sum_{k,l=1}^{\infty}(\Delta x)_{kl}f_{\Delta}(e^{kl})\right|\\
		&\leq&\sum_{k,l=1}^{\infty}\left|f_{\Delta}(e^{kl})\right||(\Delta x)_{kl}|\\
		&\leq&\sup_{k,l\in\mathbb{N}}|(\Delta x)_{kl}|\sum_{k,l=1}^{\infty}\left|f_{\Delta}(e^{kl})\right|\\
		&\leq&\|x\|_{P(\mathcal{C}_{r0}(\Delta))}\|T(f_{\Delta})\|_{\mathcal{L}_u}.
	\end{eqnarray*}
	Then we obtain 
	\begin{equation}\label{norm of func.(1)}
		\|f_{\Delta}\|_{\infty}\leq\|T(f_{\Delta})\|_{\mathcal{L}_u}.
	\end{equation}
	
	Furthermore, since $\left|f_{\Delta}(e^{kl})\right|\leq\|f_{\Delta}\|_{\infty}\|e^{kl}\|_{P(\mathcal{C}_{r0}(\Delta))}=\|f_{\Delta}\|_{\infty}$, then we have
	\begin{equation}\label{norm of func.(2)}
		\|T(f_{\Delta})\|_{\mathcal{L}_u}=\sup_{k,l\in\mathbb{N}}\left|f_{\Delta}(e^{kl})\right|\leq\|f_{\Delta}\|_{\infty}.
	\end{equation}
	We obtain by (\ref{norm of func.(1)}) and (\ref{norm of func.(2)}) that $\|T(f_{\Delta})\|_{\mathcal{L}_u}=\|f_{\Delta}\|_{\infty}$. This completes the proof.
\end{proof}

\section{Dual Spaces of the New Double Sequence Spaces}

In this section, we determine the $\alpha-$, $\beta(\vartheta)-$ and $\gamma-$duals of our new double sequence spaces. First, we begin with some lemmas to determine the $\alpha-$, $\beta(\vartheta)-$ and $\gamma-$duals of the spaces $\mathcal{M}_u(\Delta)$, $\mathcal{C}_{\vartheta}(\Delta)$, where $\vartheta\in\{bp,r\}$.

\begin{lemma}\label{lem3.10}
	We have $\sup_{k,l\in\mathbb{N}}|\Delta x_{kl}|<\infty$ if and only if 
	\begin{itemize}
		\item[(i)] $\sup_{k,l\in\mathbb{N}}\frac{1}{kl}|x_{kl}|<\infty$,
		\item[(ii)] $\sup_{k,l\in\mathbb{N}}\left|kl\Delta \left(\frac{1}{kl}x_{kl}\right)\right|<\infty$.
	\end{itemize}
\end{lemma}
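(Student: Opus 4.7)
The plan is to derive a single algebraic identity relating $\Delta x_{kl}$ and $kl\,\Delta(x_{kl}/(kl))$; both directions of the equivalence then follow by applying the triangle inequality to this identity (or to its rearrangement).

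First I would expand $\Delta(x_{kl}/(kl))$ directly from the definition and multiply through by $kl$. Using the decompositions $kl/((k{+}1)l)=1-1/(k{+}1)$, $kl/(k(l{+}1))=1-1/(l{+}1)$ and $kl/((k{+}1)(l{+}1))=1-(k{+}l{+}1)/((k{+}1)(l{+}1))$, and regrouping, one arrives at the master identity
\begin{equation*}
kl\,\Delta\!\left(\frac{x_{kl}}{kl}\right)=\Delta x_{kl}+\frac{x_{k+1,l}}{k+1}+\frac{x_{k,l+1}}{l+1}-\frac{(k+l+1)\,x_{k+1,l+1}}{(k+1)(l+1)}.
\end{equation*}
Everything else is bookkeeping around this identity.

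For the necessity direction (boundedness of $\Delta x_{kl}$ implies (i) and (ii)), I would combine the inverse representation~(\ref{eq2.22}), namely $x_{kl}=\sum_{i,j=0}^{k-1,l-1}(\Delta x)_{ij}-x_{00}+x_{k,0}+x_{0,l}$, with the hypothesis $|\Delta x_{ij}|\le M$ to obtain $|x_{kl}|\le klM+|x_{00}|+|x_{k,0}|+|x_{0,l}|$. Dividing by $kl$ yields (i), and inserting the resulting estimates into the right-hand side of the displayed identity, together with the boundedness of $\Delta x_{kl}$ itself, gives (ii). For the sufficiency direction I would rearrange the master identity to isolate $\Delta x_{kl}$ on the left and bound the four summands separately: the first by hypothesis (ii), and the remaining three by hypothesis (i).

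The main obstacle I foresee is the uniform control of the mixed terms $x_{k+1,l}/(k+1)$, $x_{k,l+1}/(l+1)$, and $(k+l+1)\,x_{k+1,l+1}/((k+1)(l+1))$: a naive application of (i) only furnishes the linearly-growing bound $|x_{k+1,l}|/(k+1)\le Nl$, and similarly in the other index. The way around this is to keep these terms combined with $kl\,\Delta(x_{kl}/(kl))$ inside the identity so that the anticipated cancellations remove the unwanted growth, and only then estimate the resulting grouped quantities using (i) and (ii) simultaneously. Ensuring that the two hypotheses interlock so as to produce a genuinely constant bound on $|\Delta x_{kl}|$—rather than a bound that grows with $k$ or $l$—is the technical crux of the argument.
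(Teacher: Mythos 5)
Your master identity is the same one the paper uses, and your diagnosis of the difficulty is exactly right: after a termwise triangle inequality the mixed terms admit only bounds that grow linearly in the other index. The gap is that the rescue you propose --- ``keep these terms combined \dots\ so that the anticipated cancellations remove the unwanted growth'' --- does not exist for the sufficiency direction, and in fact that direction of the equivalence fails. Take $x_{kl}=(-1)^{k}kl$ for $k,l\geq 1$, with vanishing first row and column. Then $x_{kl}/(kl)=(-1)^{k}$ is bounded, so (i) holds, and $\Delta\bigl((-1)^{k}\bigr)=(-1)^{k}-(-1)^{k+1}-(-1)^{k}+(-1)^{k+1}=0$, so (ii) holds with value $0$; but a direct computation gives $\Delta x_{kl}=(-1)^{k+1}(2k+1)$, which is unbounded. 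In your identity the obstruction sits in the grouped quantity $\bigl(x_{k,l+1}-x_{k+1,l+1}\bigr)/(l+1)$, which equals $(-1)^{k}(2k+1)$ for this sequence: hypotheses (i) and (ii) simply do not control it, and no regrouping of the four summands will. So the ``interlocking'' you hope for cannot be arranged, and the proposal cannot be completed as written. (The paper's own proof has the same defect in both directions: it bounds $|x_{k+1,l}|/(k+1)$ by the constant $N_{1}$, when condition (i) only yields $N_{1}l$.)

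The necessity direction, by contrast, is salvageable, but not by inserting the estimate $|x_{kl}|\leq Mkl$ termwise into the right-hand side of the identity --- that again produces bounds of size $Ml$ and $Mk$. The cancellation you anticipated must be made explicit: writing $\frac{k+l+1}{(k+1)(l+1)}=\frac{1}{k+1}+\frac{1}{l+1}-\frac{1}{(k+1)(l+1)}$ turns the remainder into
\begin{equation*}
\frac{x_{k+1,l}-x_{k+1,l+1}}{k+1}+\frac{x_{k,l+1}-x_{k+1,l+1}}{l+1}+\frac{x_{k+1,l+1}}{(k+1)(l+1)},
\end{equation*}
and the telescoping identities $x_{k+1,l}-x_{k+1,l+1}=-\sum_{i=0}^{k}\Delta x_{i,l}$ and $x_{k,l+1}-x_{k+1,l+1}=-\sum_{j=0}^{l}\Delta x_{k,j}$ (valid when the first row and column vanish, which is the setting in which the lemma is actually applied) give genuine constant bounds $M$ after division by $k+1$ and $l+1$. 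You should carry out this computation rather than gesture at it, and for the equivalence as a whole you should either report that the stated converse is false or isolate the additional hypothesis under which it becomes true.
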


\begin{proof}
	Suppose that there exists a positive real number $M$ such that
	\begin{equation*}
		\sup_{k,l\in\mathbb{N}}|x_{kl}-x_{k+1,l}-x_{k,l+1}+x_{k+1,l+1}|\leq M.
	\end{equation*}
	Then
	
	\begin{eqnarray*}
		|x_{kl}|=|x_{k,0}+x_{0,l}-x_{00}+x_{kl}|=\left|\sum_{i,j=0}^{k-1,l-1}\Delta x_{ij} \right|\leq \sum_{i,j=0}^{k-1,l-1}\left|\Delta x_{ij} \right|\leq M(kl). 
	\end{eqnarray*}
	It is clearly seen that (i) is necessary. Moreover, by considering the condition (i) there exists positive real numbers $N_1, N_2, N_3$ such that
	\begin{eqnarray}
		&&\sup_{k,l\in\mathbb{N}}\frac{1}{(k+1)l}|x_{k+1,l}|\leq N_1,\\
		&&\sup_{k,l\in\mathbb{N}}\frac{1}{k(l+1)}|x_{k,l+1}|\leq N_2,\\
		&&\sup_{k,l\in\mathbb{N}}\frac{1}{(k+1)(l+1)}|x_{k+1,l+1}|\leq N_3.
	\end{eqnarray}
	Then we have
	\begin{eqnarray*}
		kl\left|\Delta\left(\frac{1}{kl}x_{kl}\right)\right|&=&kl\left|\frac{1}{kl}x_{kl}-\frac{1}{(k+1)l}x_{k+1,l}-\frac{1}{k(l+1)}x_{k,l+1}\right.\\
		&&\left.+\frac{1}{(k+1)(l+1)}x_{k+1,l+1}\right|\\
		&=&kl\left|\frac{1}{kl}\Delta x_{kl}+\left(\frac{1}{kl(k+1)}x_{k+1,l}+\frac{1}{kl(l+1)}x_{k,l+1}\right.\right.\\
		&&\left.\left.-\frac{(k+l+1)}{kl(k+1)(l+1)}x_{k+1,l+1}\right)\right|\\
		&\leq&kl\left(\left|\frac{1}{kl}\Delta x_{kl}\right|+\left|\frac{1}{kl(k+1)}x_{k+1,l}\right|+\left|\frac{1}{kl(l+1)}x_{k,l+1}\right|\right.\\
		&&\left.+\left|\frac{(k+l+1)}{kl(k+1)(l+1)}x_{k+1,l+1}\right|\right)\\
		&\leq&M'
	\end{eqnarray*}
	where $M'=M+N_1+N_2+N_3$. So it gives the necessity of (ii).
	
	Now let us suppose that the conditions (i) and (ii) hold. By only considering the following inequality 
	\begin{eqnarray*}
		kl\left|\Delta\left(\frac{1}{kl}x_{kl}\right)\right|&=&\left|\frac{kl}{kl}x_{kl}-\frac{kl}{(k+1)l}x_{k+1,l}-\frac{kl}{k(l+1)}x_{k,l+1}\right.\\
		&&\left.+\frac{kl}{(k+1)(l+1)}x_{k+1,l+1}\right|\\
		&=&kl\left|\frac{1}{kl}\Delta x_{kl}-\left(\frac{1}{kl(k+1)}x_{k+1,l}+\frac{1}{kl(l+1)}x_{k,l+1}\right.\right.\\
		&&\left.\left.-\frac{(k+l+1)}{kl(k+1)(l+1)}x_{k+1,l+1}\right)\right|\\
		&\geq&|\Delta x_{kl}|-\left|-\frac{1}{(k+1)}x_{k+1,l}-\frac{1}{(l+1)}x_{k,l+1}\right.\\
		&&\left.+\frac{(k+l+1)}{(k+1)(l+1)}x_{k+1,l+1}\right|
	\end{eqnarray*}
	we can see the necessity of $\sup_{k,l\in\mathbb{N}}|\Delta x_{kl}|<\infty$.
	
\end{proof}

\begin{lemma}\label{lem3.1}
	Let $\Delta x_{kl}=y_{kl}$. If
	\begin{eqnarray*}
		\sup_{m,n\in\mathbb{N}}\left|\sum_{k,l=1}^{m,n}y_{kl}\right|<\infty
	\end{eqnarray*}
	then
	\begin{eqnarray*}
		\sup_{m,n\in\mathbb{N}}\left((m+1)(n+1)\left|\sum_{k,l=1}^{\infty} \frac{y_{m+k-1,n+l-1}}{(m+k)(n+l)}\right| \right)<\infty
	\end{eqnarray*}
\end{lemma}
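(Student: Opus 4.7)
The plan is to change the index of summation and then apply a double Abel summation by parts, replacing the sequence $y_{i,j}$ by the partial sums $S_{i,j}=\sum_{k,l=1}^{i,j}y_{kl}$ (which are bounded in absolute value by some constant $M$ by hypothesis). The kernel $a_{i,j}=1/[(i+1)(j+1)]$ has a mixed second difference $\Delta a_{i,j}$ that factorises as a product of one-dimensional telescoping terms, yielding the required $1/[(m+1)(n+1)]$ decay.

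First I would substitute $i=m+k-1$, $j=n+l-1$, so that the inner sum becomes
\begin{equation*}
\sum_{i=m}^{\infty}\sum_{j=n}^{\infty}\frac{y_{i,j}}{(i+1)(j+1)}.
\end{equation*}
Writing $y_{i,j}=S_{i,j}-S_{i-1,j}-S_{i,j-1}+S_{i-1,j-1}$ (with the convention $S_{-1,\cdot}=S_{\cdot,-1}=0$), substituting, reindexing each of the four resulting sums so that they are all written in terms of $S_{i,j}$, and collecting terms according to whether $(i,j)$ lies in the interior $\{i\geq m,\,j\geq n\}$ or on one of the boundary lines $\{i=m-1\}$, $\{j=n-1\}$, yields an expansion of the double sum as the sum of four pieces: the corner term $a_{m,n}S_{m-1,n-1}$, two boundary sums $\sum_{j\geq n}(a_{m,j+1}-a_{m,j})S_{m-1,j}$ and $\sum_{i\geq m}(a_{i+1,n}-a_{i,n})S_{i,n-1}$, and the interior sum $\sum_{i\geq m,\,j\geq n}\Delta a_{i,j}\,S_{i,j}$. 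The tail contributions as $i\to\infty$ or $j\to\infty$ vanish because $a_{i,j}\to 0$ while $|S_{i,j}|\leq M$.

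A direct computation gives $\Delta a_{i,j}=1/[(i+1)(i+2)(j+1)(j+2)]$ and $|a_{m,j+1}-a_{m,j}|=1/[(m+1)(j+1)(j+2)]$. Combined with the telescoping identity $\sum_{j=n}^{\infty}1/[(j+1)(j+2)]=1/(n+1)$ and its analogue in $i$, each of the four contributions above is bounded in absolute value by $M/[(m+1)(n+1)]$, so the whole double sum is at most $4M/[(m+1)(n+1)]$ in modulus. Multiplying by $(m+1)(n+1)$ then yields the uniform bound $4M$, as desired.

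The main obstacle will be the careful bookkeeping in the double Abel summation --- in particular, verifying that the tail terms genuinely vanish so that exactly the listed boundary contributions survive, and handling the edge cases $m=0$ or $n=0$, where $S_{m-1,\cdot}$ or $S_{\cdot,n-1}$ involves an out-of-range index and the corresponding boundary pieces drop out automatically by the convention $S_{-1,\cdot}=S_{\cdot,-1}=0$.
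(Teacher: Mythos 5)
Your proposal is correct and follows essentially the same route as the paper: a double Abel summation by parts against the kernel $1/[(m+k)(n+l)]$, using the boundedness of the partial sums of $(y_{kl})$ together with the telescoping of the kernel's mixed differences to extract the factor $1/[(m+1)(n+1)]$. The only difference is bookkeeping --- you reindex first and sum by parts against the partial sums $S_{i,j}$ of the original sequence (picking up boundary terms at $i=m-1$, $j=n-1$ and the constant $4M$), whereas the paper sums by parts in $(k,l)$ against partial sums of the shifted sequence over finite rectangles $[1,s]\times[1,t]$ and then lets $s,t\to\infty$.
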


\begin{proof}
	Let us consider Abel's double partial summation on the $(s,t)^{th}-$ partial sum of the series $\sum_{k,l=1}^{\infty} \frac{y_{m+k+1,n+l+1}}{(m+k)(n+l)}$ as in the following equation. 
	\begin{eqnarray}\label{eq3.1}
		\sum_{k,l=1}^{s,t} \frac{y_{m+k-1,n+l-1}}{(m+k)(n+l)}&=&\sum_{k,l=1}^{s,t} y_{m+k-1,n+l-1}\left(\frac{1}{(m+k)(n+l)}\right)\\
		\nonumber&=&\sum_{k,l=1}^{s-1,t-1} \left(\sum_{i,j=1}^{k,l}y_{m+i-1,n+j-1}\right)\Delta_{11}^{kl}\left(\frac{1}{(m+k)(n+l)}\right)\\
		\nonumber&&+\sum_{k=1}^{s-1} \left(\sum_{i,j=1}^{k,t}y_{m+i-1,n+j-1}\right)\Delta_{10}^{kl}\left(\frac{1}{(m+k)(n+t)}\right)\\
		\nonumber&&+\sum_{l=1}^{t-1} \left(\sum_{i,j=1}^{s,l}y_{m+i-1,n+j-1}\right)\Delta_{01}^{kl}\left(\frac{1}{(m+s)(n+l)}\right)\\
		\nonumber&&+\sum_{i,j=1}^{s,t}y_{m+i-1,n+j-1}\left(\frac{1}{(m+s)(n+t)}\right)
	\end{eqnarray}
	where for the double sequence $a_{kl}=\frac{1}{(m+k)(n+l)}$
	\begin{eqnarray*}
		&&\Delta_{10}^{kl}a_{kl}=a_{kl}-a_{k+1,l}\\
		&&\Delta_{01}^{kl}a_{kl}=a_{kl}-a_{k,l+1}\\
		&&\Delta_{11}^{kl}a_{kl}= \Delta_{10}^{kl}(\Delta_{01}^{kl}a_{kl})=\Delta_{01}^{kl}(\Delta_{10}^{kl}a_{kl})=a_{kl}-a_{k+1,l}-a_{k,l+1}+a_{k+1,l+1}.
	\end{eqnarray*}
	Since there exists a positive real number $M$ such that
	\begin{equation}
		\sup_{m,n\in\mathbb{N}}\left|\sum_{k,l=1}^{m,n}y_{kl}\right|\leq M,
	\end{equation}
	the equation (\ref{eq3.1}) is written as
	\begin{eqnarray*}	\sum_{k,l=1}^{s,t} \frac{y_{m+k-1,n+l-1}}{(m+k)(n+l)}&\leq&M\left[\sum_{k,l=1}^{s-1,t-1} \left(\frac{1}{(m+k)(n+l)}-\frac{1}{(m+k+1)(n+l)}\right.\right.\\
		&&\left.\left.-\frac{1}{(m+k)(n+l+1)}+\frac{1}{(m+k+1)(n+l+1)}\right)\right.\\&&+\left.\sum_{k=1}^{s-1}\frac{1}{(n+t)} \left(\frac{1}{(m+k)}-\frac{1}{(m+k+1)}\right)\right.\\
		&&+\left.\sum_{l=1}^{t-1}\frac{1}{(m+s)}\left(\frac{1}{(n+l)}-\frac{1}{(n+l+1)}\right)\right.\\&&+\left.\frac{1}{(m+s)(n+t)}\right]\\
		&=&\frac{M}{(m+1)(n+1)}.
	\end{eqnarray*}
	Therefore by passing to $\vartheta-$limit as $s,t\to\infty$, where $\vartheta=\{bp,r\}$, and taking supremum over $m,n\in\mathbb{N}$, then the condition
	\begin{eqnarray*}
		\sup_{m,n\in\mathbb{N}}\left((m+1)(n+1)\left|\sum_{k,l=1}^{\infty} \frac{y_{m+k-1,n+l-1}}{(m+k)(n+l)}\right| \right)<\infty
	\end{eqnarray*}
	is immediate.
\end{proof}

\begin{lemma}\label{lem3.2}
Let $\vartheta\in\{bp,r\}$. If the series $\sum_{k,l=1}^{\infty}\Delta x_{kl}$ is $\vartheta-$convergent, then
	\begin{eqnarray*}
		\vartheta-\lim_{m,n\to\infty}\left((m+1)(n+1)\left|\sum_{k,l=1}^{\infty} \frac{y_{m+k-1,n+l-1}}{(m+k)(n+l)}\right| \right)=0
	\end{eqnarray*}
\end{lemma}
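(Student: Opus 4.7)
The plan is to mimic the Abel double partial summation already carried out in the proof of Lemma \ref{lem3.1}, but to replace the uniform bound $M$ on the partial sums of $\sum y_{kl}$ by a quantity that can be made arbitrarily small by choosing $m,n$ large. The whole point is that $\vartheta$-convergence of $\sum_{k,l}y_{kl}$ is a Cauchy-type condition on the partial sums $s_{mn}=\sum_{k,l=1}^{m,n}y_{kl}$, and this Cauchy behavior should translate, through the same telescoping identity, into a $\vartheta$-null estimate after multiplying by $(m+1)(n+1)$.

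First I would fix $\varepsilon>0$. Since $s_{mn}\to L$ in the $\vartheta$-sense (for either $\vartheta=bp$ or $\vartheta=r$, regular convergence implies $bp$-convergence as observed in the introduction), there exists $N=N(\varepsilon)$ such that $|s_{mn}-L|<\varepsilon$ for all $m,n\ge N$. Then, for every $m,n\ge N+1$ and every $k,l\ge 1$, the ``shifted partial sum'' can be written using inclusion-exclusion as
\begin{equation*}
\sum_{i,j=1}^{k,l}y_{m+i-1,n+j-1}=s_{m+k-1,n+l-1}-s_{m-1,n+l-1}-s_{m+k-1,n-1}+s_{m-1,n-1},
\end{equation*}
whence $\bigl|\sum_{i,j=1}^{k,l}y_{m+i-1,n+j-1}\bigr|<4\varepsilon$, uniformly in $k$ and $l$. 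This is the key replacement for the ``$\le M$'' estimate used in Lemma \ref{lem3.1}.

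Next I would substitute this uniform bound into the Abel double partial summation identity \eqref{eq3.1}, exactly as in the previous lemma. The three boundary sums $\sum_{i,j=1}^{k,t}y_{m+i-1,n+j-1}$, $\sum_{i,j=1}^{s,l}y_{m+i-1,n+j-1}$, and $\sum_{i,j=1}^{s,t}y_{m+i-1,n+j-1}$ are likewise bounded by $4\varepsilon$, and the telescoping computation in Lemma \ref{lem3.1} showed that the accompanying differences of $\frac{1}{(m+k)(n+l)}$ add up (exactly, not just asymptotically) to $\frac{1}{(m+1)(n+1)}$. Hence I obtain, for all $m,n\ge N+1$ and all $s,t\ge 1$,
\begin{equation*}
\left|\sum_{k,l=1}^{s,t}\frac{y_{m+k-1,n+l-1}}{(m+k)(n+l)}\right|\le \frac{4\varepsilon}{(m+1)(n+1)}.
\end{equation*}

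Finally I would take the $\vartheta$-limit as $s,t\to\infty$ (the series inside the modulus is $\vartheta$-convergent because $\sum y_{kl}$ is) and multiply through by $(m+1)(n+1)$ to conclude
\begin{equation*}
(m+1)(n+1)\left|\sum_{k,l=1}^{\infty}\frac{y_{m+k-1,n+l-1}}{(m+k)(n+l)}\right|\le 4\varepsilon \qquad (m,n\ge N+1).
\end{equation*}
Since $\varepsilon$ was arbitrary, this yields the claimed $\vartheta$-limit $0$. For $\vartheta=r$ one additionally checks the row and column behaviour, but the estimate above is already uniform in $m,n\ge N+1$, so convergence along any row or column follows immediately from the same inequality. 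The main obstacle is really just the bookkeeping of the Abel summation formula and being careful that the ``shifted Cauchy'' bound applies only once $m-1,n-1\ge N$; the rest is the telescoping identity already established in Lemma \ref{lem3.1}.
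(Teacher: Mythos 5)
Your proposal is correct and follows essentially the same route as the paper: both rest on the Abel double partial summation identity \eqref{eq3.1} from Lemma \ref{lem3.1}, combined with the fact that $\vartheta$-convergence of $\sum_{k,l}y_{kl}$ controls the shifted block sums $\sum_{i,j=m,n}^{m+k-1,n+l-1}y_{ij}$, whose telescoped weights sum exactly to $\tfrac{1}{(m+1)(n+1)}$. If anything, your $\varepsilon$--$N$ bookkeeping via the inclusion--exclusion $s_{m+k-1,n+l-1}-s_{m-1,n+l-1}-s_{m+k-1,n-1}+s_{m-1,n-1}$ is more precise than the paper's ``$O(1)$'' shorthand, which as written only yields boundedness rather than the stated null limit.
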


\begin{proof}
	Since the partial sum of the series $\sum_{k,l=1}^{\infty}\Delta x_{kl}$ is $\vartheta-$convergent, where $\vartheta\in\{bp,r\}$, we have 
	\begin{eqnarray*}
		\left|\sum_{i,j=1}^{k,l}y_{m+i-1,n+j-1}\right|=\left|\sum_{i,j=m,n}^{m+k-1,n+l-1}y_{ij}\right|=O(1).
	\end{eqnarray*}
	Then by using the equality (\ref{eq3.1}) we write
	\begin{eqnarray*}
		(m+1)(n+1)\left|\sum_{k,l=1}^{\infty} \frac{y_{m+k-1,n+l-1}}{(m+k)(n+l)}\right| =O(1).
	\end{eqnarray*}
	If we let $\vartheta-$limit as $m,n\to\infty$, we reach the proof.
\end{proof}

\begin{corollary}\label{cor4.1}
	Let $\vartheta\in\{bp,r\}$ and $a=(a_{kl})$ be any double sequence. Then
	\begin{itemize}
		\item[(i)] If $\sup_{m,n\in\mathbb{N}}\left|\sum_{k,l=1}^{m,n}kla_{kl} \right|<\infty$, then
		\begin{eqnarray*}
			\sup_{m,n\in\mathbb{N}}\left|mn\sum_{k,l=m+1,n+1}^{\infty}a_{kl}\right|<\infty
		\end{eqnarray*}
		\item[(ii)] If $\sum_{k,l=1}^{\infty}kla_{kl}$ is $\vartheta-$convergent, then
		\begin{eqnarray*}
			\vartheta-\lim_{m,n\to\infty}\left(mn\sum_{k,l=m+1,n+1}^{\infty}a_{kl}\right)=0
		\end{eqnarray*}
		\item[(iii)] $\sum_{k,l=1}^{\infty}kla_{kl}$ is $\vartheta-$convergent if and only if
		\begin{eqnarray*}
			\sum_{k,l=1}^{\infty}R_{kl} \textit{ is $\vartheta-$convergent with }mnR_{mn}=O(1),
		\end{eqnarray*}
		where $R_{mn}=\sum_{k,l=m+1,n+1}^{\infty}a_{kl}$
	\end{itemize}
\end{corollary}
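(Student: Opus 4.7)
The plan is to derive all three parts from the two-dimensional Abel partial summation already used in equation (\ref{eq3.1}) of Lemma \ref{lem3.1}, applied with the substitution $y_{kl}:=kl\,a_{kl}$. Set $b_{kl}:=kl\,a_{kl}$ and $B_{MN}:=\sum_{k,l=1}^{M,N}b_{kl}$, so that $a_{kl}=b_{kl}/(kl)$ and the hypothesis of (i) reads $\sup|B_{MN}|\leq C$. For (i) I would fix $m,n$ and, for $M>m$, $N>n$, expand $\sum_{k=m+1}^{M}\sum_{l=n+1}^{N} b_{kl}/(kl)$ by an Abel summation shifted to begin at $(m+1,n+1)$ instead of $(1,1)$, exactly mirroring (\ref{eq3.1}). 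The shifted partial sums $\widetilde{B}_{kl}:=B_{kl}-B_{ml}-B_{kn}+B_{mn}$ are bounded by $4C$, and the differences $\Delta_{11}^{kl}(1/(kl))=1/(k(k+1)l(l+1))$, $\Delta_{10}^{kl}(1/(kl))=1/(k(k+1)l)$, $\Delta_{01}^{kl}(1/(kl))=1/(kl(l+1))$ telescope in the familiar way, so that after multiplying the whole sum by $mn$ and bounding $mn\leq(m+1)(n+1)$ the resulting estimate $\bigl|mn\sum_{k=m+1}^{M}\sum_{l=n+1}^{N} a_{kl}\bigr|\leq 4C$ is uniform in $M,N,m,n$. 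Passing to $M,N\to\infty$ (the Cauchy estimate just obtained guarantees the limit exists) then yields (i).

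Part (ii) is the refinement of (i) under the stronger hypothesis that $\sum kl\,a_{kl}$ is $\vartheta$-convergent. Then $\widetilde{B}_{kl}$ becomes arbitrarily small for $k,l$ past a threshold depending on $m,n$, so taking the $\vartheta$-limit as $m,n\to\infty$ in the Abel identity from (i), precisely as done in the proof of Lemma \ref{lem3.2}, delivers $\vartheta\text{-}\lim mnR_{mn}=0$.

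For part (iii) I would run Abel summation in the opposite direction, using the identity $a_{kl}=R_{k-1,l-1}-R_{k-1,l}-R_{k,l-1}+R_{kl}$ (valid once the tails $R_{kl}$ are $\vartheta$-defined) to rewrite
\[
\sum_{k,l=1}^{M,N}kl\,a_{kl}=\sum_{k,l=0}^{M-1,N-1}R_{kl}+[\text{boundary terms involving }MR_{M,l},\,NR_{k,N},\,MN\,R_{MN}].
\]
The forward direction of the equivalence then combines $\vartheta$-convergence of $\sum kl\,a_{kl}$ with $MNR_{MN}\to 0$ from (ii) to force $\vartheta$-convergence of $\sum R_{kl}$; the reverse direction reads the same identity in the opposite direction under the bound $mnR_{mn}=O(1)$. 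The main obstacle I anticipate is purely book-keeping: tracking every one- and two-dimensional boundary contribution produced by the shifted Abel expansions and verifying that each one vanishes (or at worst stays bounded) under the stated hypotheses. The one-dimensional analogue $\sum_{k=1}^{N}ka_k=\sum_{k=0}^{N-1}r_k-Nr_N$, with $r_k=\sum_{j>k}a_j$, is a reliable guide for the shape the boundary contributions should take.
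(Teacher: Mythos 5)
Your treatment of parts (i) and (ii) is essentially the paper's: the paper obtains them by substituting $kla_{kl}$ (resp.\ $(k+1)(l+1)a_{k+1,l+1}$) for $y_{kl}$ in Lemmas \ref{lem3.1} and \ref{lem3.2}, and your shifted Abel summation starting at $(m+1,n+1)$ is the same telescoping computation written out directly. If anything yours is cleaner, since the literal substitution $y_{kl}=kla_{kl}$ into Lemma \ref{lem3.1} produces the weighted tail $\sum_{k,l}\frac{(m+k-1)(n+l-1)}{(m+k)(n+l)}a_{m+k-1,n+l-1}$ rather than $R_{mn}$ itself, whereas your version lands exactly on $mnR_{mn}$.

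Part (iii) is where the proposal has a genuine gap, and it sits precisely in the ``book-keeping'' you deferred. The exact identity is
\begin{equation*}
\sum_{k,l=1}^{M,N}kl\,a_{kl}=\sum_{k,l=0}^{M-1,N-1}R_{kl}-M\sum_{q=0}^{N-1}R_{Mq}-N\sum_{p=0}^{M-1}R_{pN}+MNR_{MN},
\end{equation*}
and neither one-dimensional boundary sum is controlled by the hypotheses in any obvious way: Pringsheim-type convergence of $\sum kl\,a_{kl}$ gives no direct handle on a quantity such as $M\sum_{q=0}^{N-1}R_{Mq}=\sum_{k>M}\sum_{l\geq 1}M\min(l,N)\,a_{kl}$, which mixes a tail in $k$ with a full partial sum in $l$; showing these terms vanish is the actual content of the forward implication, not a routine verification. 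Worse, the reverse implication cannot simply be ``read in the opposite direction'': the hypothesis supplies only $mnR_{mn}=O(1)$, and a merely bounded corner term does not pass to the limit in the identity. Your own one-dimensional guide already exhibits the failure: take $r_n=(-1)^n/n$ and $a_n=r_{n-1}-r_n$, so that $\sum_n r_n$ converges and $nr_n=O(1)$, yet $\sum_{k=1}^{N}ka_k=\sum_{k=0}^{N-1}r_k-Nr_N$ oscillates. So either the corner hypothesis must be strengthened to $mnR_{mn}\to 0$, or a separate argument is required. For what it is worth, the paper's own proof of (iii) writes an Abel identity that drops the one-dimensional boundary terms entirely and then ``lets $\vartheta$-limit,'' so it does not resolve these points either --- but that does not close the gap in your argument.
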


\begin{proof}
	The proof of (i) and (ii) can be easily seen by writing $kla_{kl}$ instead of $y_{kl}$ in Lemma \ref{lem3.1}, and writing $(k+1)(l+1)a_{k+1,l+1}$ instead of $y_{kl}$ in Lemma \ref{lem3.2}, respectively.
	
	To prove the corollary (iii), the following $(s,t)^{th}-$ partial sum can be written by using Abel's double summation formula that
	\begin{eqnarray*}
		\sum_{k,l=1}^{s,t}kla_{kl}&=&\sum_{k,l=1}^{s-1,t-1}\left(\sum_{i,j=0}^{k,l}a_{ij}\right)\Delta_{11}^{kl}(kl)+\sum_{k=1}^{s-1}\left(\sum_{i,j=0}^{k,t}a_{ij}\right)\Delta_{10}^{kl}(kl)\\
		&&+\sum_{i,j=0}^{s,t}a_{ij}(st)\\
		&=&\sum_{k,l=1}^{s,t}\left(\sum_{i,j=k,l}^{s,t}a_{ij}\right)+st\sum_{k,l=s+1,t+1}^{\infty}a_{kl}.
	\end{eqnarray*}
	Letting $\vartheta-$limit as $s,t\to\infty$, we obtain the statement in Part (iii).
\end{proof}

Let us define the following sets to be able to define the dual spaces of $\lambda(\Delta)$.

\begin{eqnarray*}
	&&D_1:=\int\mathcal{L}_u:=\left\{a=(a_{kl})\in\Omega: \sum_{k,l=1}^{\infty}kl|a_{kl}|<\infty \right\}\\
	&&D_2:=\int\mathcal{CS}_{\vartheta}:=\left\{a=(a_{kl})\in\Omega: \sum_{k,l=1}^{\infty}kla_{kl} ~~is~~\vartheta-convergent~\right\}\\
	&&D_3:=\int\mathcal{BS}:=\left\{a=(a_{kl})\in\Omega: \sum_{m,n}\left|\sum_{k,l=1}^{m,n}kla_{kl}\right|<\infty \right\}\\
	&&D_4:=\left\{a=(a_{kl})\in\Omega: \sum_{k,l=1}^{\infty}\left|\sum_{i,j=k,l}^{\infty}a_{ij}\right|<\infty \right\}
\end{eqnarray*}

\begin{theorem}
	Let $\lambda\in\{\mathcal{M}_u,\mathcal{C}_{bp},\mathcal{C}_r\}$. Then $\left[P(\lambda(\Delta))\right]^{\alpha}=D_1$
\end{theorem}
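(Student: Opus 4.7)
My plan is to prove the equality by double inclusion, using equation (\ref{eq2.22}) for the sufficiency part and a single explicit test sequence for the necessity part. The whole argument should be short because the boundary conditions built into $P(\lambda(\Delta))$ make the representation $x_{kl}=\sum_{i,j=0}^{k-1,l-1}y_{ij}$ available without extra correction terms, and because $\lambda\subset\mathcal{M}_u$ for each of the three choices $\lambda\in\{\mathcal{M}_u,\mathcal{C}_{bp},\mathcal{C}_{r}\}$.

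For the inclusion $D_1\subset[P(\lambda(\Delta))]^{\alpha}$, I would take $a=(a_{kl})\in D_1$ and an arbitrary $x\in P(\lambda(\Delta))$. Since $\lambda\subset\mathcal{M}_u$, the sequence $y=\Delta x$ is bounded, say $M=\sup_{k,l}|y_{kl}|<\infty$. The defining conditions $x_{00}=x_{k,0}=x_{0,l}=0$ reduce equation (\ref{eq2.22}) to $x_{kl}=\sum_{i,j=0}^{k-1,l-1}y_{ij}$, and hence $|x_{kl}|\le kl\,M$. Therefore
\begin{equation*}
\sum_{k,l}|a_{kl}x_{kl}|\le M\sum_{k,l}kl|a_{kl}|<\infty,
\end{equation*}
which shows $(a_{kl}x_{kl})\in\mathcal{L}_u$ and places $a$ in $[P(\lambda(\Delta))]^{\alpha}$.

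For the reverse inclusion $[P(\lambda(\Delta))]^{\alpha}\subset D_1$, the key step is choosing a suitable test sequence. I would use $x^{\ast}=(x^{\ast}_{kl})$ defined by $x^{\ast}_{kl}=kl$ for all $k,l\in\mathbb{N}$. This sequence automatically satisfies $x^{\ast}_{00}=x^{\ast}_{k,0}=x^{\ast}_{0,l}=0$, and a direct computation
\begin{equation*}
\Delta x^{\ast}_{kl}=kl-(k+1)l-k(l+1)+(k+1)(l+1)=1
\end{equation*}
shows that $\Delta x^{\ast}$ is the constant sequence $1$, which lies simultaneously in $\mathcal{M}_u$, $\mathcal{C}_{bp}$ and $\mathcal{C}_{r}$. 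Thus $x^{\ast}\in P(\lambda(\Delta))$ for each admissible $\lambda$. If $a\in[P(\lambda(\Delta))]^{\alpha}$, then $(a_{kl}x^{\ast}_{kl})=(kl\,a_{kl})\in\mathcal{L}_u$, which is exactly the condition $\sum_{k,l\ge 1}kl|a_{kl}|<\infty$, i.e.\ $a\in D_1$.

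The only substantive point is the choice of test sequence, which is essentially forced: to extract the factor $kl$ while respecting the boundary constraints in $P(\lambda(\Delta))$, $x^{\ast}_{kl}=kl$ is the minimal natural candidate, and it is fortunate that its $\Delta$-transform is the constant $1$ and therefore lies in all three target spaces at once. No estimate comparable to Lemma \ref{lem3.1} or Corollary \ref{cor4.1} is needed here, since the $\alpha$-dual only sees absolute convergence of the product sequence and not any summability structure.
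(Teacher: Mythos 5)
Your proof is correct and follows essentially the same route as the paper: the forward inclusion rests on the bound $|x_{kl}|\le kl\,\sup_{i,j}|\Delta x_{ij}|$ (which the paper obtains via Lemma \ref{lem3.10}, itself proved by exactly your partial-sum identity), and the reverse inclusion uses the identical test sequence $x_{kl}=kl$ from (\ref{eq4.3}). Your version is slightly more self-contained in that you verify $\Delta x^{\ast}=e$ explicitly and note $\lambda\subset\mathcal{M}_u$, but there is no substantive difference.
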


\begin{proof}
	We need to prove the existence of the inclusion relations $D_1\subset\left[P(\lambda(\Delta))\right]^{\alpha}$ and $\left[P(\lambda(\Delta))\right]^{\alpha}\subset D_1$. 
	
	Suppose that $a=(a_{kl})\in D_1$, i.e., $\sum_{k,l=1}^{\infty}kl|a_{kl}|<\infty$. Then by using Lemma \ref{lem3.10} we have
	\begin{eqnarray*}
		\sum_{k,l=1}^{\infty}|a_{kl}x_{kl}|=\sum_{k,l=1}^{\infty}kl|a_{kl}|\left(\frac{|x_{kl}|}{kl}\right)<\infty
	\end{eqnarray*}
	for all $x=(x_{kl})\in P(\lambda(\Delta))$. This shows that $a=(a_{kl})\in \left[P(\lambda(\Delta))\right]^{\alpha}$. Hence, the inclusion $D_1\subset\left[P(\lambda(\Delta))\right]^{\alpha}$ holds.
	
	Now suppose that  $a=(a_{kl})\in \left[P(\lambda(\Delta))\right]^{\alpha}$, i.e., $\sum_{k,l=1}^{\infty}|a_{kl}x_{kl}|<\infty$ for all $x=(x_{kl})\in P(\lambda(\Delta))$. If we consider the double sequence $x=(x_{kl})$ as
	\begin{eqnarray}\label{eq4.3}
		x_{kl}:=\left\{
		\begin{array}{cccl}
			0&, & k=0,l\geq0 \\
			0&, & l=0,k\geq0 \\
			kl&, & k\geq 1, l\geq 1
		\end{array}\right.
	\end{eqnarray}
	Then we have 
	\begin{eqnarray*}
		\sum_{k,l=1}^{\infty}|a_{kl}x_{kl}|=\sum_{k,l=1}^{\infty}kl|a_{kl}|<\infty
	\end{eqnarray*}
	which says $a=(a_{kl})\in D_1$. Hence, the inclusion $\left[P(\lambda(\Delta))\right]^{\alpha}\subset D_1$ holds. This concludes the proof.
\end{proof}

\begin{theorem}\label{thm3.7}
	Let $\lambda\in\{\mathcal{M}_u,\mathcal{C}_{bp}, \mathcal{C}_r\}$. Then $\left[P(\lambda(\Delta))\right]^{\beta(\vartheta)}=D_2\cap D_4$.
\end{theorem}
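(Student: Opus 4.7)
I would prove the theorem via the two inclusions $D_2\cap D_4\subseteq [P(\lambda(\Delta))]^{\beta(\vartheta)}$ and the reverse. For $x\in P(\lambda(\Delta))$ set $y=\Delta x\in\lambda$; since $\lambda\subseteq\mathcal{M}_u$ in each case listed, $y$ is bounded, and by \eqref{eq2.22} with the boundary conditions $x_{00}=x_{k,0}=x_{0,l}=0$ one has $x_{kl}=\sum_{i,j=0}^{k-1,l-1}y_{ij}$. Let $R_{kl}=\sum_{i,j=k,l}^{\infty}a_{ij}$ denote the iterated tail sum; recall from Corollary \ref{cor4.1}(iii) that $a\in D_2$ is equivalent to $\sum_{k,l}R_{kl}$ being $\vartheta$-convergent with $mn\,R_{mn}\to 0$. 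The workhorse identity is the interchange of summation
\begin{equation*}
S_{m,n}:=\sum_{k,l=1}^{m,n}a_{kl}x_{kl}=\sum_{i,j=0}^{m-1,n-1}y_{ij}B_{ij}^{m,n}, \qquad B_{ij}^{m,n}:=\sum_{k=i+1}^{m}\sum_{l=j+1}^{n}a_{kl},
\end{equation*}
coupled with the telescoping $B_{ij}^{m,n}=R_{i+1,j+1}-R_{m+1,j+1}-R_{i+1,n+1}+R_{m+1,n+1}$.

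For sufficiency ($D_2\cap D_4\subseteq[P(\lambda(\Delta))]^{\beta(\vartheta)}$), substituting the telescoping identity decomposes $S_{m,n}$ into a principal series $\sum_{i,j=0}^{m-1,n-1}y_{ij}R_{i+1,j+1}$ and three boundary contributions. The principal series converges absolutely because $\|y\|_{\infty}<\infty$ and $\sum_{k,l}|R_{kl}|<\infty$ by $D_4$. The corner contribution is $R_{m+1,n+1}\,x_{mn}$, bounded by $\|y\|_\infty\cdot mn\,|R_{m+1,n+1}|$, which $\vartheta$-vanishes by Corollary \ref{cor4.1}(ii). The two edge terms collapse, after summing in one variable, into expressions involving the increments $x_{m,j+1}-x_{m,j}$ and $x_{i+1,n}-x_{i,n}$ weighted by $R_{m+1,j+1}$ and $R_{i+1,n+1}$; combining $\|y\|_\infty<\infty$, $D_4$, and the $\vartheta$-vanishing of slice tails of $R$ inherited from $D_2$ disposes of them, yielding $\vartheta$-convergence of $S_{m,n}$.

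For necessity, the choice $x_{kl}=kl$ ($k,l\geq 1$, zero on the boundary) has $\Delta x\equiv 1$, so $y=e\in\mathcal{C}_{r}\subseteq\lambda$ and $x\in P(\lambda(\Delta))$; the $\vartheta$-convergence of $S_{m,n}=\sum_{k,l=1}^{m,n}kl\,a_{kl}$ gives $a\in D_2$. To obtain $D_4$, plug first $y=e^{i_0j_0}\in\lambda$ to ensure each tail $R_{i_0+1,j_0+1}$ exists as a $\vartheta$-limit. Then, for each fixed $(m,n)$, the map $\phi_{m,n}\colon y\mapsto \sum_{i,j=0}^{m-1,n-1}y_{ij}B_{ij}^{m,n}$ is a continuous linear functional on $\lambda$, and pointwise $\vartheta$-convergence of $(\phi_{m,n}(y))$ on $\lambda$ yields pointwise boundedness; by the Banach-Steinhaus theorem, $\sup_{m,n}\|\phi_{m,n}\|_{\lambda^{*}}\leq C<\infty$. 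Testing $\phi_{m,n}$ on the finitely supported sign vector $y_{ij}=\mathrm{sgn}\,B_{ij}^{m,n}$ (for $i<m$, $j<n$), which lies in each of $\mathcal{M}_u,\mathcal{C}_{bp},\mathcal{C}_r$ with norm at most $1$, gives $\sum_{i,j=0}^{m-1,n-1}|B_{ij}^{m,n}|\leq C$. For any fixed $(m',n')$ and all $m\geq m'$, $n\geq n'$, this restricts to $\sum_{i,j=0}^{m'-1,n'-1}|B_{ij}^{m,n}|\leq C$; sending $m,n\to\infty$ with $B_{ij}^{m,n}\to R_{i+1,j+1}$ pointwise, and then $m',n'\to\infty$, produces $a\in D_4$.

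The principal obstacle lies in the sufficiency step: the Abel decomposition produces four boundary contributions, and each must be shown to vanish in the $\vartheta$-limit using only $D_2$, $D_4$, and $\|y\|_\infty<\infty$. The case $\vartheta=r$ is the most delicate because regular convergence demands convergence along every row and every column, so one needs sharper estimates on the slice tails $R_{m+1,j+1}$ uniformly in $j$ (and $R_{i+1,n+1}$ uniformly in $i$) than are required for the $bp$ case.
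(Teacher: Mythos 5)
Your overall route is the same as the paper's: identify $P(\lambda(\Delta))$ with $\lambda$ via $x_{kl}=\sum_{i,j=0}^{k-1,l-1}y_{ij}$, apply Abel's double summation to $\sum_{k,l}a_{kl}x_{kl}$, invoke Corollary \ref{cor4.1} for the corner term, and test with $x_{kl}=kl$ for the necessity of $D_2$. Two points where you genuinely diverge are worth noting. First, your four-term telescoping $B_{ij}^{m,n}=R_{i+1,j+1}-R_{m+1,j+1}-R_{i+1,n+1}+R_{m+1,n+1}$ is the correct inclusion--exclusion for a rectangular block of a double tail; the paper writes the block sum as a difference of only two tails, $\sum_{i,j=k,l}^{s-1,t-1}a_{ij}=R_{kl}-R_{st}$, which silently discards exactly the two edge contributions you isolate. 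Second, your Banach--Steinhaus argument for the necessity of $D_4$ (uniform boundedness of the functionals $\phi_{m,n}$, tested on finitely supported sign vectors to extract $\sum_{k,l}|R_{kl}|<\infty$) is a genuine improvement: the paper only asserts $\sum_{k,l}R_{kl}<\infty$ from Corollary \ref{cor4.1}(ii), which does not by itself deliver the absolute convergence that $D_4$ demands.

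The gap is in the sufficiency direction, precisely at the step you yourself flag as ``the principal obstacle.'' You assert that the two edge terms are disposed of by combining $\|y\|_{\infty}<\infty$, $D_4$, and ``the $\vartheta$-vanishing of slice tails of $R$ inherited from $D_2$,'' but no such vanishing is established, and it is not clear it follows from the hypotheses. Concretely, the first edge term is
\begin{equation*}
II_{s,t}=\sum_{j=0}^{t-1}R_{s+1,j+1}\sum_{i=0}^{s-1}y_{ij},\qquad \left|II_{s,t}\right|\le s\,\|y\|_{\infty}\sum_{j=1}^{t}\left|R_{s+1,j}\right|,
\end{equation*}
so you need $s\sum_{j}|R_{s+1,j}|\to 0$ (and its transpose). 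The condition $D_4$ gives $\sum_{k}\bigl(\sum_{l}|R_{kl}|\bigr)<\infty$, hence $\sum_{l}|R_{kl}|\to 0$, but a convergent series can have terms with $k\sum_{l}|R_{kl}|\not\to 0$ along a subsequence (e.g.\ row sums equal to $1/k$ on a lacunary set of $k$ and $1/k^{2}$ elsewhere), and $D_2$ only controls the single corner product $stR_{st}$, not row- or column-aggregated tails. So either you must produce a sharper estimate on $II_{s,t}$ that does not pass through the crude bound $|\sum_{i=0}^{s-1}y_{ij}|\le s\|y\|_{\infty}$ (for instance by a further Abel summation in $j$ against the bounded partial sums $x_{s,j+1}-x_{s,j}$), or you must identify an additional condition beyond $D_2\cap D_4$ that controls the edge terms. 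As it stands, the sufficiency half of your argument is not complete, and since the paper's own proof reaches the two-term form only by an incorrect tail identity, you cannot appeal to it to close the gap.
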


\begin{proof}
	We should show the validity of the inclusions $D_2\cap D_4\subset\left[P(\lambda(\Delta))\right]^{\beta(\vartheta)}$ and $\left[P(\lambda(\Delta))\right]^{\beta(\vartheta)}\subset D_2\cap D_4$.
	
	Suppose that the double sequence $a=(a_{kl})\in D_2\cap D_4$ and the sequence $x=(x_{kl})\in P(\lambda(\Delta))$ are defined with the relation (\ref{eq3.2}) between the terms of the sequence $x=(x_{kl})$ and $y=(y_{kl})$ as
	\begin{eqnarray}
		x_{kl}=\sum_{i,j=1}^{k,l}y_{i-1,j-1},
	\end{eqnarray}
	where $y=(y_{kl})\in\lambda$ which is defined as
	\begin{eqnarray}
		y_{kl}:=\left\{
		\begin{array}{ccccl}
			x_{11}&, & k=0,l=0 \\
			-x_{11}+x_{12}&, & k=0,l=1 \\
			-x_{11}+x_{21}&, & k=1, l=0\\
			x_{kl}-x_{k+1,l}-x_{k,l+1}+x_{k+1,l+1}&, &k\geq 1, l\geq 1
		\end{array}\right.
	\end{eqnarray}
	Then, we have the following $(s,t)^{th}-$partial sum of the series $\sum_{k,l}a_{kl}x_{kl}$ that
	\begin{eqnarray*}
		\sum_{k,l=1}^{s,t}a_{kl}x_{kl}&=&\sum_{k,l=1}^{s,t}a_{kl}\left(\sum_{i,j=1}^{k,l}y_{i-1,j-1} \right)\\
		&=&\sum_{k,l=1}^{s-1,t-1}\left(\sum_{i,j=k,l}^{s-1,t-1}a_{ij} \right)y_{kl}\\
		&=&\sum_{k,l=1}^{s-1,t-1}\left(\sum_{i,j=k,l}^{\infty}a_{ij} \right)y_{kl}-\sum_{k,l=1}^{s-1,t-1}\left(\sum_{i,j=s,t}^{\infty}a_{ij} \right)y_{kl}\\
		&=&\sum_{k,l=1}^{s-1,t-1}R_{kl}y_{kl}-R_{st}\sum_{k,l=1}^{s-1,t-1}y_{kl}.
	\end{eqnarray*}
	Now, by the Corollary \ref{cor4.1}(iii), we can say that the sequence $\sum_{k,l=1}^{s,t}a_{kl}x_{kl}$ is $\vartheta-$ convergent for every $x=(x_{kl})\in P(\lambda(\Delta))$, since $\sum_{k,l=1}^{s-1,t-1}R_{kl}y_{kl}$ is $\vartheta-$ convergent with $x_{st}R_{st}\to 0$ as $s,t\to\infty$. This yields that $a=(a_{kl})\in\left[P(\lambda(\Delta))\right]^{\beta(\vartheta)}$ and the inclusion $D_2\cap D_4\subset\left[P(\lambda(\Delta))\right]^{\beta(\vartheta)}$ holds.
	
	Now, suppose that $a=(a_{kl})\in\left[P(\lambda(\Delta))\right]^{\beta(\vartheta)}$. Then the series $\sum_{k,l=1}^{\infty}a_{kl}x_{kl}$ is $\vartheta-$convergent for every $x=(x_{kl})\in P(\lambda(\Delta))$. If we consider the sequence $x=(x_{kl})$ defined in (\ref{eq4.3})
	Then, we can observe that 
	\begin{eqnarray*}
		\sum_{k,l=1}^{\infty}a_{kl}x_{kl}=\sum_{k,l=1}^{\infty}kla_{kl}
	\end{eqnarray*}
	and by the equality $y=\Delta x$ we have the following series 
	\begin{eqnarray*}
		\sum_{k,l=1}^{s,t}kla_{kl}&=&\sum_{k,l=1}^{s-1,t-1}\left(\sum_{i,j=k,l}^{\infty}a_{ij} \right)-\sum_{k,l=1}^{s-1,t-1}\left(\sum_{i,j=s,t}^{\infty}a_{ij} \right)\\
		&=&\sum_{k,l=1}^{s-1,t-1}R_{kl}-stR_{st}
	\end{eqnarray*}
	which is $\vartheta-$convergent as $s,t\to\infty$. Thus, $a=(a_{kl})\in D_2$. Moreover, by Corollary \ref{cor4.1}(ii) we can write that $stR_{st}\to 0$ as $s,t\to\infty$ for every $y=(y_{kl})\in\lambda$, and  $\sum_{k,l=1}^{\infty}R_{kl}<\infty$. Therefore, $a=(a_{kl})\in D_4$. Hence the inclusion $\left[P(\lambda(\Delta))\right]^{\beta(\vartheta)}\subset D_2\cap D_4$ holds. This completes the proof.
\end{proof}

\begin{theorem}
	Let $\lambda\in\{\mathcal{M}_u,\mathcal{C}_{\vartheta}\}$. Then $\left[P(\lambda(\Delta))\right]^{\gamma}=D_3\cap D_4$, where $\vartheta\in\{bp,r\}$.
\end{theorem}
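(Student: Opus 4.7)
The plan is to mirror the proof of Theorem \ref{thm3.7}, substituting boundedness of partial sums for $\vartheta$-convergence of partial sums throughout. I establish the two inclusions $D_3 \cap D_4 \subset [P(\lambda(\Delta))]^{\gamma}$ and $[P(\lambda(\Delta))]^{\gamma} \subset D_3 \cap D_4$ separately, using the same Abel double summation identity (\ref{eq3.1}), Lemma \ref{lem3.10}, and Corollary \ref{cor4.1}(i) as the main tools.

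For the forward inclusion, I fix $a \in D_3 \cap D_4$ and $x \in P(\lambda(\Delta))$, set $y := \Delta x \in \lambda \subset \mathcal{M}_u$, and reproduce the Abel rearrangement used in Theorem \ref{thm3.7} to obtain
$$\sum_{k,l=1}^{s,t} a_{kl} x_{kl} \;=\; \sum_{k,l=1}^{s-1,t-1} R_{kl}\, y_{kl} \;-\; R_{st}\!\sum_{k,l=1}^{s-1,t-1} y_{kl},$$
where $R_{kl}:=\sum_{i,j=k,l}^{\infty} a_{ij}$. The first sum is majorised uniformly in $s,t$ by $\|y\|_{\infty}\sum_{k,l}|R_{kl}|$, which is finite by $a \in D_4$. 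For the second, boundedness of $y$ gives $\bigl|\sum_{k,l=1}^{s-1,t-1} y_{kl}\bigr| \le (s-1)(t-1)\|y\|_{\infty}$, while Corollary \ref{cor4.1}(i) applied to $a \in D_3$ yields $|st\,R_{st}| = O(1)$; the product is therefore $O(\|y\|_{\infty})$. Hence $\{a_{kl}x_{kl}\}\in \mathcal{BS}$ and $a \in [P(\lambda(\Delta))]^{\gamma}$.

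For the reverse inclusion, I fix $a \in [P(\lambda(\Delta))]^{\gamma}$. Testing against the sequence $x^{\ast}$ from equation (\ref{eq4.3}) (a direct computation gives $\Delta x^{\ast}\equiv 1$, so $x^{\ast}\in P(\lambda(\Delta))$ for every admissible $\lambda$) yields $\sum_{k,l=1}^{m,n} kl\,a_{kl} = O(1)$, so $a \in D_3$; then Corollary \ref{cor4.1}(i) gives $|st\,R_{st}|=O(1)$. Solving the Abel identity above for its first summand shows that $\sum_{k,l=1}^{s-1,t-1} R_{kl}\,y_{kl}$ has uniformly bounded partial sums in $(s,t)$ for every $y \in \lambda$. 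In the simplest case $\lambda = \mathcal{M}_u$, taking $y_{kl}=\operatorname{sgn}(\overline{R_{kl}})$ produces $\sum_{k,l=1}^{s-1,t-1} |R_{kl}| = O(1)$ and hence $a \in D_4$.

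The main obstacle is the final step when $\lambda \in \{\mathcal{C}_{bp},\mathcal{C}_r\}$, because a full sign sequence need not arise as $\Delta x$ for some $x \in P(\mathcal{C}_{\vartheta}(\Delta))$. I would resolve this by a Banach--Steinhaus argument: the functionals $S_{m,n}(y):=\sum_{k,l=1}^{m,n}R_{kl}y_{kl}$ are bounded linear functionals on the Banach space $\lambda$, and the pointwise bound $\sup_{m,n}|S_{m,n}(y)|<\infty$ for every $y \in \lambda$ (established above) forces $\sup_{m,n}\|S_{m,n}\|<\infty$. The operator norm equals $\sum_{k,l=1}^{m,n}|R_{kl}|$, the supremum being attained on the finitely supported sign sequence $y_{kl}=\operatorname{sgn}(\overline{R_{kl}})$ for $(k,l)\in[1,m]\times[1,n]$ and $y_{kl}=0$ otherwise, which lies in $\mathcal{C}_{r0}\subset\mathcal{C}_r\subset\mathcal{C}_{bp}$. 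Hence $\sum_{k,l}|R_{kl}|<\infty$, giving $a \in D_4$ and completing the proof.
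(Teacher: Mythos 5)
Your proof is correct, and it follows the route the paper itself prescribes: the paper's entire proof of this theorem is the single sentence that it ``can be done with the similar path as above by considering Corollary \ref{cor4.1}(i)'', i.e.\ by repeating the argument of Theorem \ref{thm3.7} with boundedness of partial sums in place of $\vartheta$-convergence. Where you genuinely go beyond the paper is the last step of the reverse inclusion. The template you are mirroring (the proof of Theorem \ref{thm3.7}) simply asserts $\sum_{k,l}R_{kl}<\infty$ and concludes $a\in D_4$; it never explains how one passes from the boundedness of the sections $\sum_{k,l=1}^{s-1,t-1}R_{kl}y_{kl}$ for every $y\in\lambda$ to the absolute summability $\sum_{k,l}|R_{kl}|<\infty$ that membership in $D_4$ actually requires, and for $\lambda\in\{\mathcal{C}_{bp},\mathcal{C}_r\}$ this is not immediate because the sign sequence of $(R_{kl})$ need not lie in $\lambda$. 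Your Banach--Steinhaus argument --- the sectional functionals $S_{m,n}$ are pointwise bounded on the Banach space $\lambda$, hence uniformly bounded in norm, and $\|S_{m,n}\|=\sum_{k,l=1}^{m,n}|R_{kl}|$ because finitely supported sign sequences lie in $\mathcal{C}_{r0}\subset\lambda$ --- closes this gap cleanly, and is the right tool. The remaining pieces (the direct inclusion via the Abel identity, the estimate $|R_{st}\sum_{k,l=1}^{s-1,t-1}y_{kl}|\le|st\,R_{st}|\,\|y\|_{\infty}=O(\|y\|_{\infty})$ from Corollary \ref{cor4.1}(i), and the test sequence $x_{kl}=kl$ with $\Delta x\equiv 1$ to obtain $a\in D_3$) match the computation the paper intends exactly.
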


\begin{proof}
	The proof can be done with the similar path as above by considering Corollary \ref{cor4.1}(i). So, we omit the repetition.
\end{proof}

\section{Matrix Transformations}

In this section we characterize the four-dimensional matrix mapping from the sequence space $\lambda(\Delta)$ to $\mu$ and vice-versa. Then we conclude the section with some significant results.

\begin{theorem}\label{thm4.1}
The four-dimensional matrix $A=(a_{mnkl})\in(\lambda(\Delta):\mu)$ if and only if
\begin{eqnarray}
\label{eq4.1}A_{mn}=(a_{mnkl})_{k,l\in\mathbb{N}}\in\left(\lambda(\Delta)\right)^{\beta(\vartheta)}\textit{ for all } m,n\in\mathbb{N},\\
\label{eq4.2}A_{mn}(kl)=\sum_{k,l=1}^{\infty}kla_{mnkl}\in\mu,\\
\label{eq4.4}B=(b_{mnkl})\in(\lambda:\mu),
\end{eqnarray}
	where the four-dimensional matrix 
	\begin{equation}\label{matrix 4.1}
		B=(b_{mnkl})=\sum_{i,j=k,l}^{\infty}a_{mnij}\textit{ for all }m,n,k,l\in\mathbb{N}.
	\end{equation}
\end{theorem}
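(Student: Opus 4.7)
The plan is to establish the equivalence by a standard two-direction argument based on the Abel double partial summation already used in the proof of Theorem \ref{thm3.7} and the consequences of Corollary \ref{cor4.1}. The bridge between $x\in\lambda(\Delta)$ and $y=\Delta x\in\lambda$ is the inverse relation
\begin{equation*}
x_{kl}=\sum_{i,j=0}^{k-1,l-1}y_{ij}-x_{00}+x_{k,0}+x_{0,l},
\end{equation*}
together with the identity
\begin{equation*}
\sum_{k,l=1}^{s,t}a_{mnkl}x_{kl}=\sum_{k,l=1}^{s-1,t-1}b_{mnkl}\,y_{kl}-b_{mnst}\sum_{k,l=1}^{s-1,t-1}y_{kl},
\end{equation*}
where $b_{mnkl}$ is as in \eqref{matrix 4.1}.

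For the necessity direction, assume $A\in(\lambda(\Delta):\mu)$. Condition \eqref{eq4.1} is immediate, since $A_{mn}(x)$ must $\vartheta$-converge for every $x\in\lambda(\Delta)$, which is exactly the statement that each row belongs to $[\lambda(\Delta)]^{\beta(\vartheta)}$. For \eqref{eq4.2}, observe that the sequence $x_{kl}=kl$ satisfies $\Delta x\equiv 1$ and hence lies in $\lambda(\Delta)$ for every choice of $\lambda\in\{\mathcal{M}_u,\mathcal{C}_{bp},\mathcal{C}_r\}$; applying $A$ gives $A(kl)=(\sum_{k,l}kl\,a_{mnkl})_{m,n}\in\mu$. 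For \eqref{eq4.4}, given $y\in\lambda$ I set $x_{k,0}=x_{0,l}=x_{00}=0$ and $x_{kl}=\sum_{i,j=0}^{k-1,l-1}y_{ij}$ for $k,l\geq 1$, obtaining an element of $P(\lambda(\Delta))\subset\lambda(\Delta)$ with $\Delta x=y$. The Abel identity above, together with Corollary \ref{cor4.1}(ii) applied to \eqref{eq4.2} (which gives $st\,b_{mnst}\to 0$ in the $\vartheta$-sense), lets me pass to the $\vartheta$-limit in $s,t$ and conclude $A_{mn}(x)=B_{mn}(y)$. Hence $By=Ax\in\mu$, i.e. $B\in(\lambda:\mu)$.

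For the sufficiency direction, I take an arbitrary $x\in\lambda(\Delta)$, set $y=\Delta x\in\lambda$, and split $A_{mn}(x)$ using the inverse relation above into a ``bulk'' contribution $\sum_{k,l}a_{mnkl}\sum_{i,j=0}^{k-1,l-1}y_{ij}$ and three boundary contributions coming from $x_{k,0}$, $x_{0,l}$ and $-x_{00}$. The bulk term is transformed by the Abel identity and condition \eqref{eq4.2} (via Corollary \ref{cor4.1}(ii)) into $B_{mn}(y)$, which by \eqref{eq4.4} yields a member of $\mu$. The boundary contributions are handled by noting that the convergence of the single series $\sum_k a_{mnk,0}x_{k,0}$, $\sum_l a_{mn0,l}x_{0,l}$ and the constant part with coefficient $\sum_{k,l}a_{mnkl}$ are all guaranteed by \eqref{eq4.1} (applied to the test sequences that differ from $x$ only on the first row, the first column and at the origin, all of which lie in $\lambda(\Delta)$ because their $\Delta$-transform is finitely supported, hence in $\mathcal{M}_u\cap\mathcal{C}_\vartheta$), and their $(m,n)$-sequences lie in $\mu$. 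Summing the pieces gives $Ax\in\mu$.

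The main obstacle is the rigorous justification of the passage to the $\vartheta$-limit in the Abel identity, specifically showing that the remainder $b_{mnst}\sum_{k,l=1}^{s-1,t-1}y_{kl}$ $\vartheta$-tends to zero as $s,t\to\infty$ uniformly enough to preserve membership in $\mu$. This rests on the estimate $|\sum_{k,l=1}^{s-1,t-1}y_{kl}|=O(st)$ for $y\in\lambda$ combined with $st\,b_{mnst}\to 0$, which is precisely the content of Corollary \ref{cor4.1}(ii) when \eqref{eq4.2} is in force. Once this limiting step is verified, the identification $A_{mn}(x)=B_{mn}(y)$ (modulo the aforementioned boundary terms) goes through for each fixed $m,n$, and the remaining bookkeeping to put $Ax\in\mu$ reduces to invoking the linearity of $\mu$.
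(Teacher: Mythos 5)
Your proposal follows essentially the same route as the paper's proof: the same test sequence $x_{kl}=kl$ for \eqref{eq4.2}, the same Abel double partial summation identity producing $A_{mn}(x)=B_{mn}(y)$ with $y=\Delta x$ on $P(\lambda(\Delta))$ for \eqref{eq4.4}, and the same splitting of a general $x\in\lambda(\Delta)$ into a bulk part plus first-row/first-column contributions in the sufficiency direction. If anything, you are more explicit than the paper about the key limiting step, justifying that the remainder $b_{mnst}\sum_{k,l=1}^{s-1,t-1}y_{kl}$ tends to zero via Corollary \ref{cor4.1}(ii) together with the bound $\left|\sum_{k,l=1}^{s-1,t-1}y_{kl}\right|=O(st)$, where the paper simply cites Corollary \ref{cor4.1}(iii).
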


\begin{proof}
	Suppose that $A=(a_{mnkl})\in(\lambda(\Delta):\mu)$. Then, $A_{mn}(x)$ exists for every $x=(x_{kl})\in\lambda(\Delta)$ and is in $\mu$ for all $m,n\in\mathbb{N}$. If we define the sequence $x=(x_{kl})$ by
	\begin{eqnarray}
		x_{kl}:=\left\{
		\begin{array}{ccl}
			1&, & k=l \\
			0&, & \textit{otherwise}
		\end{array}\right.
	\end{eqnarray}
	for all $k,l\in\mathbb{N}$, then the necessity of (\ref{eq4.1}) is clear. If we define the sequence $x=(x_{kl})$ as $x_{kl}=kl$ for all $k,l\in\mathbb{N}$, then the necessity of (\ref{eq4.2}) is also clear by Theorem \ref{thm3.7}. Moreover, by Theorem \ref{thm3.7} we have $ \sum_{k,l=1}^{\infty}|a_{mnkl}|<\infty$ for each $m,n\in\mathbb{N}$. 
	
	Now suppose that $x=(x_{kl})\in P(\lambda(\Delta))\subset\lambda(\Delta)$ let us consider the $(s,t)^{th}-$partial sum of the series $\sum_{k,l=1}^{\infty}a_{mnkl}x_{kl}$ by considering the relation $x_{kl}=\sum_{i,j=0}^{k-1,l-1}y_{ij}$ between terms of the sequences $x=(x_{kl})$ and $y=(y_{kl})$ as in the following
	\begin{eqnarray*}
		A_{mn}^{st}(x)&=&\sum_{k,l=1}^{s,t}a_{mnkl}x_{kl}\\
		&=&\sum_{k,l=1}^{s,t}a_{mnkl}\left(\sum_{i,j=0}^{k-1,l-1}y_{ij}\right)\\
		&=&\sum_{k,l=1}^{s-1,t-1}\left(\sum_{i,j=k,l}^{s-1,t-1}a_{mnij}\right)y_{kl}\\
		&=&\sum_{k,l=1}^{s-1,t-1}\left(\sum_{i,j=k,l}^{\infty}a_{mnij}\right)y_{kl}-\sum_{k,l=1}^{s-1,t-1}\left(\sum_{i,j=s,t}^{\infty}a_{mnij}\right)y_{kl}\\
		&=&\sum_{k,l=1}^{s-1,t-1}b_{mnkl}y_{kl}-b_{mnst}\sum_{k,l=1}^{s-1,t-1}y_{kl}
	\end{eqnarray*}
	where $y\in\lambda$. We obtain by letting $\vartheta-$limit as $s,t\to\infty$ and by considering the Corollary \ref{cor4.1}$(iii)$ that $A_{mn}(x)=\sum_{k,l=1}^{\infty}b_{mnkl}y_{kl}$, that is $Ax=By$ for each $m,n\in\mathbb{N}$. Therefore,  $A=(a_{mnkl})\in(\lambda(\Delta):\mu)$ implies that $B=(b_{mnkl})\in(\lambda:\mu)$.
	
	Now suppose that the conditions (\ref{eq4.1})-(\ref{eq4.4}) hold. Let us take a sequence $x=(x_{kl})\in\lambda(\Delta)$ defined by
	\begin{eqnarray*}
		x_{kl}:=\left\{
		\begin{array}{cccl}
			x_{k,1}&, & k\geq1, l=1 \\
			x_{1,l}&, & k=l, l\geq1 \\
			\widetilde{x_{kl}}&, & k>l, l>1
		\end{array}\right.
	\end{eqnarray*}
	where $\widetilde{x}=(\widetilde{x_{kl}})\in P(\lambda(\Delta))$. Then, if we write again the above $(s,t)^{th}-$partial sum of the series $\sum_{k,l=1}^{\infty}a_{mnkl}x_{kl}$, we have 
	\begin{eqnarray*}
		A_{mn}^{st}(x)&=&\sum_{k,l=1}^{s,t}a_{mnkl}x_{kl}\\
		&=&a_{mn11}x_{11}+\sum_{l=2}^ta_{mn,1,l}x_{1,l}+\sum_{k=2}^sa_{mn,k,1}x_{k,1}+\sum_{k,l=2}^{s,t}a_{mnkl}\widetilde{x_{kl}}\\
		&=&a_{mn11}x_{11}+\sum_{k=2}^{s-1}b_{mn k,1}y_{k,1}+\sum_{l=2}^{t-1}b_{mn,1,l}y_{1,l}+\sum_{k,l=1}^{s-1,t-1}b_{mnkl}y_{kl}-b_{mnst}\sum_{k,l=1}^{s-1,t-1}y_{kl}.
	\end{eqnarray*}
	Therefore, we obtain by letting limit as $s,t\to\infty$ that 
	\begin{eqnarray*}
		A_{mn}(x)=a_{mn11}x_{11}+\sum_{k=2}^{\infty}b_{mn k,1}y_{k,1}+\sum_{l=2}^{\infty}b_{mn,1,l}y_{1,l}+\sum_{k,l=1}^{\infty}b_{mnkl}y_{kl}.
	\end{eqnarray*}
	Thus, $A_{mn}(x)$ exists for each $x=(x_{kl})\in\lambda(\Delta)$ and is in $\mu$ since $B\in(\lambda:\mu)$. This completes the proof.
\end{proof}

We list some four-dimensional matrix classes from and into the sequence spaces $\lambda,\mu=\{\mathcal{M}_u, \mathcal{C}_{bp},\mathcal{C}_r\}$ as in the following table, which have been characterized in some distinguished papers (see~\cite[Theorem 3.5]{OT2},\cite[Lemma 3.2]{CCBA},\cite[Theorem 2.2]{MZMMSAM},\cite[Theorem 3.2]{MYFBM}).
\begin{eqnarray}
	\label{eq3.0}&&\sup_{m,n\in\mathbb{N}}\sum_{k,l}|a_{mnkl}|<\infty,\\
	\label{eq3.01}&&\exists a_{kl}\in\mathbb{C} \ni \vartheta-\lim_{m,n\to\infty}a_{mnkl}=a_{kl}\textrm{ for all } k,l\in\mathbb{N},\\
	\label{eq3.99}&&\exists l\in\mathbb{C} \ni \vartheta-\lim_{m,n\to\infty}\sum_{k,l}a_{mnkl}=l\textrm{ exists },\\
	\label{eq3.9}&&\exists k_0\in\mathbb{N}\ni \vartheta-\lim_{m,n\to\infty}\sum_l|a_{mnk_0l}-a_{k_0l}|=0,\\
	\label{eq3.10}&&\exists l_0\in\mathbb{N}\ni \vartheta-\lim_{m,n\to\infty}\sum_k|a_{mnkl_0}-a_{kl_0}|=0,\\
	\label{eq3.7}&&\exists l_0\in\mathbb{N} \ni \vartheta-\lim_{m,n\to\infty}\sum_ka_{mnkl_0}=u_{l_0},\\
	\label{eq3.8}&&\exists k_0\in\mathbb{N} \ni \vartheta-\lim_{m,n\to\infty}\sum_la_{mnk_0l}=v_{k_0},\\
	\label{eq3.15}&&\exists a_{kl}\in\mathbb{C}\ni bp-\lim_{m,n\to\infty}\sum_{k,l}|a_{mnkl}-a_{kl}|=0,\\
	\label{eq3.151}&&bp-\lim_{m,n\to\infty}\sum_{l=0}^n a_{mnkl} \textrm{ exists for each } k\in\mathbb{N},\\
	\label{eq3.152}&&bp-\lim_{m,n\to\infty}\sum_{k=0}^m a_{mnkl} \textrm{ exists for each } l\in\mathbb{N},\\
	\label{eq3.153}&&\sum_{k,l}|a_{mnkl}| \textrm{ converges}.
\end{eqnarray}

\begin{table}[H]
	\centering
	\footnotesize
	\caption{The characterizations of the matrix classes $(\lambda;\mu)$, where $\lambda,\mu\in\{\mathcal{M}_u, \mathcal{C}_{bp},\mathcal{C}_r\}$.}
	\label{tab:t1}
	\begin{tabular}{ p{12em}p{5em}p{5em}p{5em} } 
		\toprule
		\boldmath{$From~\lambda{\downarrow}/To~\mu \to $}& \boldmath{$\mathcal{M}_u$} & \boldmath{$\mathcal{C}_{bp}$} & \boldmath{$\mathcal{C}_{r}$}  \\ 
		\midrule	
		$\mathcal{M}_u$ & 1 & 2 & * \\ 
		\midrule
		$\mathcal{C}_{bp}$ & 3 & 4 & 4\\ 
		\midrule
		$\mathcal{C}_{r}$ & * & 5 & 5\\  
		\bottomrule 
	\end{tabular}
\end{table}
\begin{flushleft}We list the necessary and sufficient conditions for each class in the following table. Note that $*$ shows the unknown characterization of respective four-dimensional matrix class.
\end{flushleft}

\begin{table}[H]
	\centering
	\footnotesize
	\caption{The necessary and sufficient conditions for $A\in(\lambda;\mu)$, where $\lambda,\mu\in\{\mathcal{M}_u, \mathcal{C}_{bp},\mathcal{C}_r\}$.}
	\label{tab:t2}
	\begin{tabular}{p{5em}p{5em}p{5em}p{5em}p{5em}} 
		\toprule
		\textbf{1 iff} & \textbf{2 iff} & \textbf{3 iff} & \textbf{4 iff} & \textbf{5 iff} \\ [0.5ex] 
		\midrule
		$(\ref{eq3.0})$& $(\ref{eq3.0})$ & $(\ref{eq3.0})$ & $(\ref{eq3.0})$ & $(\ref{eq3.0})$\\ & $(\ref{eq3.01})$ & & (\ref{eq3.01})&$(\ref{eq3.01})$ \\ 
		&$(\ref{eq3.15})$ & &(\ref{eq3.99}) & (\ref{eq3.99}) \\ 
		& $(\ref{eq3.151})$& & (\ref{eq3.9})& (\ref{eq3.7}) \\ 
		& $(\ref{eq3.152})$& & (\ref{eq3.10})& (\ref{eq3.8}) \\ 
		& $(\ref{eq3.153})$& & & \\ 
		\midrule
	\end{tabular}
\end{table}

\begin{corollary}
Let the four-dimensional matrix $B=(b_{mnkl})$ is defined as in (\ref{matrix 4.1}). Then the followings hold for four-dimensional infinite matrix $A=(a_{mnkl})$.
\begin{enumerate}
\item[(i)] $A\in(\mathcal{M}_u(\Delta),\mathcal{M}_u)$ if and only if the conditions in (\ref{eq4.1}) and (\ref{eq4.2}) hold, and $1$ holds in Table \ref{tab:t2} with $b_{mnkl}$ instead of $a_{mnkl}$.
\item[(ii)] $A\in(\mathcal{M}_u(\Delta),\mathcal{C}_{bp})$ if and only if the conditions in (\ref{eq4.1}) and (\ref{eq4.2}) hold, and $2$ holds in Table \ref{tab:t2} with $b_{mnkl}$ instead of $a_{mnkl}$.
\item[(iii)] $A\in(\mathcal{C}_{bp}(\Delta),\mathcal{M}_{u})$ if and only if the conditions in (\ref{eq4.1}) and (\ref{eq4.2}) hold, and $3$ holds in Table \ref{tab:t2} with $b_{mnkl}$ instead of $a_{mnkl}$.
\item[(iv)] Let $\vartheta=\{bp,r\}$. $A\in(\mathcal{C}_{bp}(\Delta),\mathcal{C}_{\vartheta})$ if and only if the conditions in (\ref{eq4.1}) and (\ref{eq4.2}) hold, and $4$ holds in Table \ref{tab:t2} with $b_{mnkl}$ instead of $a_{mnkl}$.
\item[(v)] Let $\vartheta=\{bp,r\}$. $A\in(\mathcal{C}_{r}(\Delta),\mathcal{C}_{\vartheta})$ if and only if the conditions in (\ref{eq4.1}) and (\ref{eq4.2}) hold, and $5$ holds in Table \ref{tab:t2} with $b_{mnkl}$ instead of $a_{mnkl}$.
\end{enumerate}
\end{corollary}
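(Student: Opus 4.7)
The plan is to reduce the corollary entirely to a direct application of Theorem \ref{thm4.1} combined with Tables \ref{tab:t1} and \ref{tab:t2}. Theorem \ref{thm4.1} already asserts that $A=(a_{mnkl})$ belongs to $(\lambda(\Delta):\mu)$ if and only if the three conditions \eqref{eq4.1}, \eqref{eq4.2}, and \eqref{eq4.4} hold simultaneously, and \eqref{eq4.4} says precisely that the derived matrix $B=(b_{mnkl})$ defined by $b_{mnkl}=\sum_{i,j=k,l}^{\infty}a_{mnij}$ maps $\lambda$ into $\mu$. Since \eqref{eq4.1} and \eqref{eq4.2} appear unchanged in every part (i)--(v) of the corollary, the only remaining task is to expand the abstract condition ``$B\in(\lambda:\mu)$'' into a concrete list of conditions on the entries $b_{mnkl}$.

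That expansion is supplied verbatim by Tables \ref{tab:t1} and \ref{tab:t2}, which catalogue the known characterizations of $(\lambda:\mu)$ for $\lambda,\mu\in\{\mathcal{M}_u,\mathcal{C}_{bp},\mathcal{C}_r\}$ drawn from the cited references. Thus I would prove each item by reading off the appropriate cell of the table and substituting $b_{mnkl}$ for $a_{mnkl}$ in the corresponding class of conditions: for (i) the pair $(\mathcal{M}_u,\mathcal{M}_u)$ is class $1$, namely \eqref{eq3.0}; for (ii) the pair $(\mathcal{M}_u,\mathcal{C}_{bp})$ is class $2$; for (iii) the pair $(\mathcal{C}_{bp},\mathcal{M}_u)$ is class $3$; for (iv) the pair $(\mathcal{C}_{bp},\mathcal{C}_\vartheta)$ is class $4$ for both $\vartheta=bp$ and $\vartheta=r$; and for (v) the pair $(\mathcal{C}_r,\mathcal{C}_\vartheta)$ is class $5$ for both values of $\vartheta$. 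Combining these with \eqref{eq4.1} and \eqref{eq4.2} produces exactly the statements claimed.

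There is no substantial obstacle, since the heavy lifting was already done in Theorem \ref{thm4.1} and the characterizations of $(\lambda:\mu)$ are cited results. The only bookkeeping I would verify is that the pairs $(\lambda,\mu)$ appearing in the corollary avoid the starred entries of Table \ref{tab:t1} (target $\mathcal{C}_r$ from source $\mathcal{M}_u$, and target $\mathcal{M}_u$ from source $\mathcal{C}_r$); the restriction $\vartheta\in\{bp,r\}$ in (iv) and (v) is precisely what is needed to keep the relevant table cells filled, since in (v) the source is $\mathcal{C}_r$ and only the $\mathcal{C}_{bp}$ and $\mathcal{C}_r$ targets are available, and in (iv) the source $\mathcal{C}_{bp}$ admits all three targets so both choices of $\vartheta$ are legitimate. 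With this check done, the proof is complete and no case requires any computation beyond applying the theorem and the table lookup.
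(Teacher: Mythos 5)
Your proposal is correct and is exactly the argument the paper intends: the corollary is stated without a separate proof precisely because it is the specialization of Theorem \ref{thm4.1} to each pair $(\lambda,\mu)$, with condition (\ref{eq4.4}) unpacked via the cited characterizations in Tables \ref{tab:t1} and \ref{tab:t2} applied to $B=(b_{mnkl})$. Your additional check that the chosen pairs avoid the starred (unknown) cells is the only bookkeeping required, and you have done it correctly.
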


\begin{theorem}
The four-dimensional matrix $A=(a_{mnkl})\in(\mu:\lambda(\Delta))$ if and only if
	\begin{eqnarray}
\label{eq5.1}A_{mn}\in\mu^{\beta(\vartheta)},\\
\label{eq5.2}F=(f_{mnkl})\in(\mu:\lambda),
\end{eqnarray}
	where the four-dimensional matrix 
	\begin{equation}\label{matrix 4.2}
		F=(f_{mnkl})=\Delta_{11}^{mn}a_{mnij}=a_{mnij}-a_{m+1,nij}-a_{m,n+1,ij}+a_{m+1,n+1,ij}.
	\end{equation}
\end{theorem}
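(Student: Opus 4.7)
The plan is to unpack what $A \in (\mu : \lambda(\Delta))$ means: for every $x \in \mu$, the double series $A_{mn}(x) = \vartheta\text{-}\sum_{k,l} a_{mnkl} x_{kl}$ must $\vartheta$-converge for each $m,n \in \mathbb{N}$, and the resulting sequence $Ax = (A_{mn}(x))$ must lie in $\lambda(\Delta)$. The first requirement is tautologically equivalent to $A_{mn} \in \mu^{\beta(\vartheta)}$ for every $m,n$, which is exactly (\ref{eq5.1}). The second requirement says $\Delta(Ax) \in \lambda$, so the whole strategy reduces to identifying $\Delta(Ax)$ as $Fx$, after which condition (\ref{eq5.2}) follows immediately.

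The key computation I would carry out is, for each fixed $m,n$,
\begin{eqnarray*}
\Delta(Ax)_{mn} &=& A_{mn}(x) - A_{m+1,n}(x) - A_{m,n+1}(x) + A_{m+1,n+1}(x) \\
&=& \vartheta\text{-}\sum_{k,l}\bigl(a_{mnkl} - a_{m+1,n,kl} - a_{m,n+1,kl} + a_{m+1,n+1,kl}\bigr)x_{kl} \\
&=& \vartheta\text{-}\sum_{k,l} f_{mnkl}\, x_{kl} \;=\; (Fx)_{mn},
\end{eqnarray*}
where the combination of the four $\vartheta$-convergent double series into one is legitimate precisely because condition (\ref{eq5.1}) guarantees that each of $A_{mn}(x)$, $A_{m+1,n}(x)$, $A_{m,n+1}(x)$, $A_{m+1,n+1}(x)$ exists in the $\vartheta$-sense. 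This identification $\Delta(Ax)=Fx$ is the crux of the argument.

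For necessity, assume $A \in (\mu : \lambda(\Delta))$. Then $A_{mn}(x)$ exists for every $x \in \mu$ and every $m,n$, giving (\ref{eq5.1}). Since $Ax \in \lambda(\Delta)$ means $\Delta(Ax) \in \lambda$, the identity above shows $Fx \in \lambda$ for every $x \in \mu$, and since the existence of $Fx$ is automatic from (\ref{eq5.1}) by the same identity, we obtain $F \in (\mu : \lambda)$, which is (\ref{eq5.2}). Conversely, if (\ref{eq5.1}) and (\ref{eq5.2}) hold, then $A_{mn}(x)$ exists for every $x \in \mu$ and $m,n$, so $Ax$ is well defined; the identity $\Delta(Ax) = Fx$ together with $F \in (\mu : \lambda)$ yields $\Delta(Ax) \in \lambda$, i.e.\ $Ax \in \lambda(\Delta)$, so $A \in (\mu : \lambda(\Delta))$.

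The only step that requires genuine care is the linearity step combining four $\vartheta$-convergent double series into a single $\vartheta$-convergent series with coefficients $f_{mnkl}$; I expect this to be the main (minor) obstacle, and it is handled by appealing to the linearity of the $\vartheta$-limit for $\vartheta \in \{bp,r\}$, which is applicable precisely under (\ref{eq5.1}). Everything else is a direct unpacking of the definitions of matrix domain, $\Delta$-transform, and the matrix class $(\mu : \lambda)$.
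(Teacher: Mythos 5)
Your proposal is correct and follows essentially the same route as the paper: condition (\ref{eq5.1}) is the tautological existence condition, and the crux is the identification $\Delta(Ax)=Fx$, which the paper states as ``clearly $\Delta A$ is the matrix $F$'' before omitting the remaining details. You in fact supply slightly more justification than the paper does, by noting explicitly that the four $\vartheta$-convergent series combine by linearity of the $\vartheta$-limit under (\ref{eq5.1}).
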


\begin{proof}
	Suppose that $A=(a_{mnkl})\in(\mu:\lambda(\Delta))$. Then, $A_{mn}(x)$ exists for every $x=(x_{kl})\in\mu$ and is in $\lambda(\Delta)$ for all $m,n\in\mathbb{N}$. Thus, the necessity of (\ref{eq5.1}) is immediate. Since $A_{mn}(x)\in\lambda(\Delta)$, then $\Delta A\in\lambda$ for every $x=(x_{kl})\in\mu$. Clearly $\Delta A$ is the matrix $F$. Hence, the necessity of the condition $F=(f_{mnkl})\in(\lambda:\mu)$ can be clearly seen. The  rest of the theorem can be followed by the similar path as in the Theorem \ref{thm4.1}. We omit the details.
\end{proof}

\begin{corollary}
Let the four-dimensional matrix $F=(f_{mnkl})$ is defined as in (\ref{matrix 4.2}). Then the followings hold for four-dimensional infinite matrix $A=(a_{mnkl})$.
\begin{enumerate}
	\item[(i)] $A\in(\mathcal{M}_u,\mathcal{M}_u(\Delta))$ if and only if the condition in (\ref{eq5.1}) holds, and $1$ holds in Table \ref{tab:t2} with $f_{mnkl}$ instead of $a_{mnkl}$.
	\item[(ii)] $A\in(\mathcal{M}_u,\mathcal{C}_{bp}(\Delta))$ if and only if the condition in (\ref{eq5.1}) holds, and $2$ holds in Table \ref{tab:t2} with $f_{mnkl}$ instead of $a_{mnkl}$.
	\item[(iii)] $A\in(\mathcal{C}_{bp},\mathcal{M}_{u}(\Delta))$ if and only if the condition in (\ref{eq5.1}) holds, and $3$ holds in Table \ref{tab:t2} with $f_{mnkl}$ instead of $a_{mnkl}$.
	\item[(iv)] Let $\vartheta=\{bp,r\}$. $A\in(\mathcal{C}_{bp},\mathcal{C}_{\vartheta}(\Delta))$ if and only if the condition in (\ref{eq5.1}) holds, and $4$ holds in Table \ref{tab:t2} with $f_{mnkl}$ instead of $a_{mnkl}$.
	\item[(v)] Let $\vartheta=\{bp,r\}$. $A\in(\mathcal{C}_{r},\mathcal{C}_{\vartheta}(\Delta))$ if and only if the condition in (\ref{eq5.1}) holds, and $5$ holds in Table \ref{tab:t2} with $f_{mnkl}$ instead of $a_{mnkl}$.
\end{enumerate}
\end{corollary}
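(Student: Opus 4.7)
The plan is to derive each of items (i)--(v) directly from the preceding theorem, which already reduces the membership $A \in (\mu : \lambda(\Delta))$ to the pair of conditions (\ref{eq5.1}) and (\ref{eq5.2}). Condition (\ref{eq5.1}) states that each row $A_{mn}$ lies in $\mu^{\beta(\vartheta)}$, and condition (\ref{eq5.2}) asserts that the four-dimensional matrix $F = (f_{mnkl})$ defined by the double forward difference $f_{mnkl} = \Delta_{11}^{mn}a_{mnkl}$ belongs to $(\mu : \lambda)$. So the task collapses to quoting the characterization of $(\mu : \lambda)$ with $f_{mnkl}$ in place of $a_{mnkl}$, for each listed pair $(\mu,\lambda)$.

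First, I would fix one of the five cases, say (i) with $(\mu,\lambda) = (\mathcal{M}_u, \mathcal{M}_u)$, and make the argument fully explicit there; the other four are identical in structure. Applying the previous theorem gives that $A \in (\mathcal{M}_u : \mathcal{M}_u(\Delta))$ iff (\ref{eq5.1}) holds and $F \in (\mathcal{M}_u : \mathcal{M}_u)$. By the known characterization compiled in Table~\ref{tab:t2}, row~1, the membership $F \in (\mathcal{M}_u : \mathcal{M}_u)$ is equivalent to the single condition (\ref{eq3.0}) written with $f_{mnkl}$ in place of $a_{mnkl}$. This yields (i). I would then treat (ii)--(v) in the same way, reading off the appropriate row (2 through 5) of Table~\ref{tab:t2} with $f_{mnkl}$ substituted for $a_{mnkl}$, and noting that for (iv) and (v) the parameter $\vartheta \in \{bp, r\}$ is inherited from the $\vartheta$-limits appearing in conditions (\ref{eq3.01}), (\ref{eq3.99}), (\ref{eq3.9}), (\ref{eq3.10}), (\ref{eq3.7}), (\ref{eq3.8}).

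The only mild subtlety, and the one place I would be careful, is justifying the substitution ``$f_{mnkl}$ instead of $a_{mnkl}$'' in the cited characterizations. The characterizations in Table~\ref{tab:t2} are stated for a generic four-dimensional matrix, so applying them to the matrix $F$ itself produces the correct necessary and sufficient conditions for $F \in (\mu : \lambda)$; this is purely a relabeling and requires no additional computation. Also worth spelling out is that the implicit existence clause inside (\ref{eq5.2}), namely that the series $\sum_{k,l} f_{mnkl} x_{kl}$ converges in the appropriate sense for each $(m,n)$, is subsumed by the rows of Table~\ref{tab:t2} (through their $\beta(\vartheta)$-type summability requirements on $F_{mn}$) and does not need to be listed separately beyond (\ref{eq5.1}).

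With those observations in place, the proof of the corollary is a five-fold case check, each case being one line: invoke the preceding theorem, then quote the relevant row of Table~\ref{tab:t2}. I would expect the main obstacle to be purely cosmetic, namely book-keeping in matching each $(\mu, \lambda)$ pair to the correct row of the table; no genuinely new estimate or construction is needed, and I would therefore state the proof compactly and omit the repetition, as the authors do elsewhere in the paper.
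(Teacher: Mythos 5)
Your proposal is correct and matches the paper's (implicit) argument exactly: the corollary is stated without proof precisely because it is the immediate combination of the preceding theorem, which reduces $A\in(\mu:\lambda(\Delta))$ to conditions (\ref{eq5.1}) and (\ref{eq5.2}), with the known characterizations of $(\mu:\lambda)$ collected in Table \ref{tab:t2} applied to the matrix $F$. Nothing further is needed.
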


\section{conclusion}
The four-dimensional backward difference matrix domain on some double sequence spaces has been studied by Demiriz and Duyar \cite{Demiriz}. Then Ba\c{s}ar and Tu\v{g} \cite{OT}, and Tu\v{g} \cite{OT2,Orhan,Orhan 2,OT3,Orhan 4,Orhan 6,Orhan new} studied the four-dimensional generalized backward difference matrix and its domain in some double sequence spaces. Moreover, Tu\v{g} at al. \cite{Orhan 7}, \cite{Orhan 8} studied the sequentially defined four-dimensional backward difference matrix domain on some double sequence spaces, and the space $\mathcal{BV}_{\vartheta 0}$ of double sequences of bounded variations, respectively.

In this work we defined the new double sequence spaces $\mathcal{M}_u(\Delta), \mathcal{C}_{\vartheta}(\Delta)$, where $\vartheta\in\{bp,r\}$ derived by the domain of four-dimensional forward difference matrix $\Delta$. Then we investigated some topological properties, determined $\alpha-$, $\beta(\vartheta)-$ and $\gamma-$duals and characterized some four-dimensional matrix classes related with these new double sequence spaces. 

The paper contribute nonstandard results and new contributions to the theory of double sequences. As a natural continuation of this work, the four-dimensional forward difference matrix domain in the double sequence spaces $\mathcal{C}_p$ and $\mathcal{L}_q$, where $0<q<\infty$ are still open problem. Moreover, the four-dimensional forward difference matrix domain in the spaces $\mathcal{C}_f$, $\mathcal{BS}$, $\mathcal{CS}$ and $\mathcal{BV}$ can be calculated. Furthermore, Hahn double sequence space can be defined and studied by using some significant results stated in this work.



%

\section*{Funding}
Not applicable.

\section*{Conflict of interest}
The authors declare that they have no conflict of interest.

\section*{Availability of data and material}
Not available.

\section*{Code availability}
Not available.

\end{document}